\documentclass[12pt]{amsart}
\pdfoutput=1
\usepackage{fullpage}
\usepackage{amssymb}
\usepackage{amstext}
\usepackage{amsmath}
\usepackage{amsthm}
\usepackage{stmaryrd}
\usepackage{verbatim}
\usepackage{booktabs}
\usepackage{romannum}
\usepackage[colorlinks,
            linkcolor=blue,
            anchorcolor=blue,
            citecolor=blue
            ]{hyperref}
\usepackage{diagbox}
\usepackage{soul}
\newcommand{\D}{\operatorname{D}}

\newtheorem{theorem}{Theorem}[section]
\newtheorem{corollary}[theorem]{Corollary}
\newtheorem{proposition}[theorem]{Proposition}
\newtheorem{lemma}[theorem]{Lemma}
\theoremstyle{remark}

\newtheorem{example}{Example}

\begin{document}
\title{Evaluation of reciprocal sums of hyperbolic functions using quasimodular forms} 
\author{Wei Wang}
\address{Department of Mathematics, Shaoxing University, Shaoxing 312000, China}
\email{weiwang\_math@163.com}

\begin{abstract}
This paper studies eight families of infinite series involving hyperbolic functions. Under some conditions, these series are linear combinations of derivatives of Eisenstein series. Using complex multiplication theory, the structure of the rings of modular forms and quasimodular forms, and certain differential operators defined on these rings, this paper gives a systematic method for computing the values of these series at CM points.  This paper also expresses the generalized reciprocal sums of Fibonacci numbers as the special values of the series mentioned above. Thus it gives some algebraic independence results about the generalized reciprocal sums of Fibonacci numbers.
\end{abstract}
\keywords{Hyperbolic functions, complex multiplication, quasimodular forms, Fibonacci zeta function}
\subjclass[2020]{11F25, 11J85}
\thanks{The author is supported by NSFC 11701272 and NSFC 12071221.}
\maketitle
\pagenumbering{arabic}

\section{Introduction}\label{section_intro}
Let $z$ be a point on the upper half plane and $q=\exp(2\pi i z)$. Let $s\geq 1$ and $p\geq 0$ be two integers. We define the following eight families of series:
\begin{align*}
&\operatorname{\Romannum{1}}_s^p(q)=\sum_{n\geq1}\frac{n^pq^{ns}}{(1-q^{2n})^s},
&\operatorname{\Romannum{5}}_s^p(q)&=\sum_{n\geq1}\frac{(-1)^{n-1}n^pq^{ns}}{(1-q^{2n})^s},\\
&\operatorname{\Romannum{2}}_s^p(q)=\sum_{n\geq 1}\frac{n^pq^{ns}}{(1+q^{2n})^s},
&\operatorname{\Romannum{6}}_s^p(q)&=\sum_{n\geq 1}\frac{(-1)^{n-1}n^pq^{ns}}{(1+q^{2n})^s},\\
&\operatorname{\Romannum{3}}_s^p(q)=\sum_{n\geq1}\frac{(2n-1)^pq^{(n-\frac12)s}}{(1-q^{2n-1})^s},
&\operatorname{\Romannum{7}}_s^p(q)&=\sum_{n\geq1}\frac{(-1)^{n-1}(2n-1)^pq^{(n-\frac12)s}}{(1-q^{2n-1})^s},\\
&\operatorname{\Romannum{4}}_s^p(q)=\sum_{n\geq1}\frac{(2n-1)^pq^{(n-\frac12)s}}{(1+q^{2n-1})^s},
&\operatorname{\Romannum{8}}_s^p(q)&=\sum_{n\geq1}\frac{(-1)^{n-1}(2n-1)^pq^{(n-\frac12)s}}{(1+q^{2n-1})^s}.
\end{align*}
Regarding the above series, we mainly focus on two cases to evaluate their values. The first case is when $z$ is a complex multiplication (CM, for short) point, i.e. an element of an imaginary quadratic field with positive imaginary part. For example, let $z=ci/2$ and $c$ is a square root of a rational number. Then the above series can be expressed as the reciprocal sums of hyperbolic functions:
\begin{align*}
&\operatorname{\Romannum{1}}_s^p(e^{-\pi c})=2^{-s}\sum_{n=1}^{\infty}n^p \operatorname{cosech}^s(n\pi c),
&\operatorname{\Romannum{5}}_s^p(e^{-\pi c})=2^{-s}\sum_{n=1}^{\infty}(-1)^{n-1}n^p \operatorname{cosech}^s(n\pi c),\\
&\operatorname{\Romannum{2}}_s^p(e^{-\pi c})=2^{-s}\sum_{n=1}^{\infty}n^p \operatorname{sech}^s(n\pi c),
&\operatorname{\Romannum{6}}_s^p(e^{-\pi c})=2^{-s}\sum_{n=1}^{\infty}(-1)^{n-1}n^p \operatorname{sech}^s(n\pi c),
\end{align*}
\[
\begin{aligned}
&\operatorname{\Romannum{3}}_s^p(e^{-\pi c})=2^{-s}\sum_{n=1}^{\infty}(2n-1)^p \operatorname{cosech}^s[(2n-1)\pi c/2],\\
&\operatorname{\Romannum{7}}_s^p(e^{-\pi c})=2^{-s}\sum_{n=1}^{\infty}(-1)^{n-1}(2n-1)^p \operatorname{cosech}^s[(2n-1)\pi c/2],\\
&\operatorname{\Romannum{4}}_s^p(e^{-\pi c})=2^{-s}\sum_{n=1}^{\infty}(2n-1)^p \operatorname{sech}^s[(2n-1)\pi c/2],\\
&\operatorname{\Romannum{8}}_s^p(e^{-\pi c})=2^{-s}\sum_{n=1}^{\infty}(-1)^{n-1}(2n-1)^p \operatorname{sech}^s[(2n-1)\pi c/2].
\end{aligned}
\]

These series have attracted the interest of many authors. In his famous letter to Hardy in 1913, Ramanujan gave an expression 
\[
\text{I}^{4n}_2(e^{-\pi})=\frac{n}{\pi}\left(\frac{B_{4n}}{8n}+\frac{1^{4n-1}}{e^{2\pi}-1}+\frac{2^{4n-1}}{e^{4\pi}-1}+\frac{3^{4n-1}}{e^{6\pi}-1}+\cdots\right).
\]
Further examples can be found in his notebooks \cite{MR2952081,MR1486573,MR0099904}. For example, in \cite[Chapter 17]{MR2952081}, a lot of the series are evaluated in terms of the complete elliptic integrals. In a series of papers \cite{MR0348150,MR0364692,MR0467067}, Ling studied some of the above series for $p=0$ and gave their exact values for $c=1,\sqrt{3},1/\sqrt{3}$. Zucker \cite{MR0516762} expressed a lot of the above series mostly for $p=0$ as the Jacobian elliptic functions. He showed that when a certain parameter in these series is the square root of a rational number the series can be expressed in terms of $\Gamma$-function and $\pi$.  
Xu and Zhao \cite{xu2022functional} used residue theorem and asymptotic formulas of trigonometric and hyperbolic functions to express a lot of families of the above series for $p=2$ in terms of some special values of $\Gamma$-function and $\pi$. They also proved that for higher $p$, the series can be evaluated by recurrence formulas. 

Berndt \cite{MR3574639} showed that certain classes of infinite integrals containing trigonometric and hyperbolic trigonometric functions can be expressed by the reciprocal sums of  hyperbolic functions. For example, let $a$ denote a non-negative integer, Berndt proved that
\[
(1+i^{2a})\int_{0}^{\infty}\frac{x^{2a+1}dx}{\cos x+\cosh x}=\frac{\pi^{2a+2}i^{a}}{2^{a}}\sum_{n=1}^{\infty}(-1)^{n-1}(2n-1)^{2a+1}\operatorname{sech}[(2n-1)\pi/2]
\]
and
\[
(1+i^{a+2})\int_{0}^{\infty}\frac{x^{a+1}dx}{\cos x-\cosh x}=2i(1+i)^{a}\pi^{a+2}\sum_{n=1}^{\infty}(-1)^{n-1}n^{a+1}\operatorname{cosech}(n\pi).
\]
Rui, Xu, Yang and Zhao \cite{RUI2025128676} considered more general classes of Berndt-type integrals, and they evaluated these integrals in closed forms by expressing these integrals as reciprocal sums of hyperbolic functions. 

The methods used in this paper are distinct from those mentioned in the previous literature. We apply the theory of quasimodular forms. As a result, we obtain a more systematic way to evaluate these series. In particular, we provide a complete list of those series that can be represented by derivatives of Eisenstein series (Theorem \ref{theorem_main}). 
By the theory of complex multiplication, values of the series listed in Theorem \ref{theorem_main} at CM points can be expressed using special values of $\Gamma$-function and $\pi$.

We also consider these series from another perspective, focusing on the case where $q$ is an algebraic number. Recall that the sequence of Fibonacci numbers $F_n$ is defined by the following formula:
\[
F_n=F_{n-1}+F_{n-2},~F_0=0,~F_1=1,~n\geq 0.
\]
It is well-known that they have the explicit expressions
\[
F_n=\frac{\alpha^n-\beta^n}{\alpha-\beta}\text{, where }\alpha=\frac{1+\sqrt{5}}{2},\beta=-\alpha^{-1}.
\]
Expressing the reciprocal sums of Fibonacci numbers as special values of certain functions has a long history. Landau \cite{MR1504354} showed that
\[
\frac{1}{\sqrt{5}}\sum_{n=0}^{\infty}\frac1{F_{2n}}=L\left(\frac{3-\sqrt{5}}{2}\right)-L\left(\frac{7-3\sqrt{5}}{2}\right).
\]
and
\[
\sum_{n=0}^{\infty}\frac1{F_{2n+1}}=\frac{\sqrt{5}}{4}\theta_2^2\left(\frac{3-\sqrt{5}}{2}\right),
\]
where $L(q)=\sum_{n\geq 1}\frac{q^n}{1-q^n}$ and $\theta_2(q)=1+2\sum_{n\geq 1}q^{n^2/4}$. In \cite{MR1628840}, more examples are presented, for example 
\[
\sum_{n=1}^{\infty}\frac{(-1)^n}{F_{2n}^2}=\frac{(\alpha-\beta)^2}{24}E_2(\beta^2),
\]
where $E_2(q)=1-24\sum_{n=1}^{\infty}\frac{nq^n}{1-q^n}$.

More generally, one can define the Fibonacci zeta function by the series
\[
\zeta_{F}(s):=\sum_{n=1}^{\infty}F_n^{-s}.
\]
The series converges for $\mathfrak{R}(s)> 0$ due to the exponential growth of Fibonacci numbers. Readers interested in the Fibonacci zeta function can read the survey by R. Murty \cite{murty2013fibonacci}. Using the  the explicit expression for Fibonacci numbers, we get
\begin{align*}
\zeta_{F}(s)&=5^{s/2}\sum_{n\geq 1}\frac{1}{((-1)^n\beta^{-n}-\beta^{n})^s}=5^{s/2}\left(\sum_{n\geq 1}\frac{\beta^{2ns}}{(1-\beta^{4n})^s}+(-1)^s\sum_{n\geq 1}\frac{\beta^{(2n-1)s}}{(1-\beta^{(4n-2)})^s}\right)\\
&=5^{s/2}\left(\text{I}_s^0(\beta^2)+(-1)^s\text{IV}_s^0(\beta^2)\right).
\end{align*}
Inspired by the above expression, we define the generalized Fibonacci zeta function by the series
\[
\zeta^p_{F}(s):=\sum_{n=1}^{\infty}n^pF_n^{-s}.
\]
The special values of this function and its various extensions can be expressed as some linear combinations of the series introduced at the beginning of the paper.

We are interested in special values of the Fibonacci zeta function at positive integers, especially concerning their transcendence. Using Nesterenko's theorem on the Ramanujan functions, Elsner, Shimomura  and Shiokawa  proved the algebraic independence results concerning the special values of the Fibonacci zeta function in a series of papers \cite{MR2354148,MR2456843,MR2794928}. They proved that the three numbers $\zeta_F(2s_1),\zeta_F(2s_2),\zeta_F(2s_3)$ are algebraically independent if and only if one of the three distinct positive integers $s_1,s_2,s_3$ are even. Elsner, Shimomura  and Shiokawa's approach is to express the values of the Fibonacci zeta function at positive even integers in terms of elliptic integrals explicitly. Thus, by utilizing Nesterenko's theorem in conjunction with a criterion for algebraic independence, they obtained their theorem. In this paper, we discuss the algebraic independence of the special values of the generalized Fibonacci zeta function. We can see that without knowing the exact values of the generalized Fibonacci zeta function, we can establish their algebraic independence by certain properties of quasimodular forms.

The paper is organized as follows. Section \ref{section_pre} introduces some notation and fundamental concepts about modular forms and quasimodular forms. In Section \ref{section_derandser}, we express the series defined at the beginning as linear combinations of quasimodular forms, serving as the foundation for subsequent discussions. Section \ref{section_cmvalue} presents some methods for computing the values of modular forms and quasimodular forms at CM points, and we provide detailed calculations for two examples. In Section \ref{section_fibonacci}, we express special values of the generalized Fibonacci zeta function as values of quasimodular forms. Additionally, we discuss some of their algebraic independence results.

\section{Preliminaries}\label{section_pre}
Let $\chi$ be a Dirichlet character. For any matrix $\gamma=\begin{pmatrix}
a&b\\
c&d
\end{pmatrix}\in \operatorname{M}_2(\mathbb{Z})$, define $\chi(\gamma):=\overline{\chi(a)}$. Let $k$ be a positive integer. For function $f$ defined on the upper half plane,  the slash operator of weight $k$ is defined by
\[
(f|_k\gamma)(z)=(ad-bc)^{k/2}(cz+d)^{-k}f\left(\frac{az+b}{cz+d}\right).
\]
Now let $\chi$ be a character modulo $N$ such that $\chi(-1)=(-1)^k$. Recall that a modular form in $M_k(\Gamma_0(N),\chi)$ is a holomorphic function on the upper half plane which satisfies
\begin{equation}\label{trans_mod}
(f|_k\gamma)(z)=\chi(\gamma)f(z), \text{ for every }\gamma=\begin{pmatrix}
a&b\\
c&d
\end{pmatrix} \in\Gamma_0(N),
\end{equation}
and also is holomorphic at cusps. For $f\in M_k(\Gamma_0(N),\chi)$ and positive integer $n$, the $V_n$ operator is defined by $\left(f|_{V_n}\right)(z):=f(nz)$. It can be verified directly that $f|_{V_n}\in M_k(\Gamma_0(nN),\chi)$.

A typical example of modular forms for $\operatorname{SL}_2(\mathbb{Z})$ is the Eisenstein series of weight $2k\geq 4$, which is defined by the series
\[
E_{2k}(z)=\frac{1}{2}\sum_{\substack{(c,d)\in\mathbb{Z}^2 \\ \gcd(c,d)=1}}
\frac{1}{(cz+d)^{2k}}.
\]
It has Fourier expansion
\[
E_{2k}(z)=1-\frac{4k}{B_{2k}}\sum_{n\geq 1}\sigma_{2k-1}(n)q^n=1-\frac{4k}{B_{2k}}\sum_{n,m\geq 1}n^{2k-1}q^{nm},~q=e^{2\pi iz},
\]
with the Bernoulli numbers $B_{k}$ defined by
\[
\frac{t}{e^t-1}:=\sum_{k=0}^{\infty}B_k\frac{t^k}{k!}.
\]
For primitive Dirichlet characters $\psi,\varphi$ such that $(\psi\varphi)(-1)=(-1)^k$, we define the generalized divisor sum as
\[
\sigma_{k-1}^{\psi,\varphi}(n)=\sum_{m\mid n}\psi(n/m)\varphi(m)m^{k-1}.
\]
For the Dirichlet character $\psi$ modulo $f$, the generalized Bernoulli numbers  $B_{k,\psi}$ are defined by 
\[
\sum_{u=0}^{f-1}\psi(t)\frac{te^{ut}}{e^{ft}-1}=\sum_{k=0}^{\infty}B_{k,\psi}\frac{t^k}{k!}.
\]
If $k\geq 3$, define the $q$-series 
\[
E_k^{\psi,\varphi}(z)=-\delta(\psi)\frac{B_{k,\psi}}{2k}+\sum_{n=1}^{\infty}\sigma_{k-1}^{\psi,\varphi}(n)q^n,
\]
where $\delta(\varphi)=1$ if $\varphi=1$, and 0 otherwise.
For $k=1$, define the $q$-series 
\[
E_1^{\psi,\varphi}(z)=-\delta(\varphi)\frac{B_{1,\psi}}2-\delta(\psi)\frac{B_{1,\phi}}{2}+\sum_{n=1}^{\infty}\sigma_{0}^{\psi,\varphi}(n)q^n.
\]
The theory of Eisenstein series tells us that $E_k^{\psi,\varphi}(z)\in M_k(\Gamma_0(N),\chi)$, where $\chi=\psi\varphi$ and $N$ is the modulus of $\chi$, cf. \cite{MR2112196}.
We also recall that the Eisenstein series of weight 2 is defined by
\[
E_2(z)=1-24\sum_{n=1}^{\infty}\sigma_1(n)q^n.
\]
It is well-known that the function $E_2(z)$ has the  transformation property
\begin{equation}\label{trans_E2}
(cz+d)^{-2}E_2\left(\frac{az+b}{cz+d}\right)=E_2(z)+\frac{6}{\pi i}\frac{c}{cz+d}, \text{ for every }\begin{pmatrix}
a&b\\
c&d
\end{pmatrix} \in\operatorname{SL}_2(\mathbb{Z}).
\end{equation}
By the above transformation property, we can see that $NE_2(Nz)-E_2(z)$ is a modular form in $M_2(\Gamma_0(N))$. 

Let $f$ be a modular form in $M_k(\Gamma_0(N),\chi)$ with Fourier expansion $\sum_{n\geq 0}a_n q^n$. The differential operator is defined by
\[
\D f=\frac{1}{2\pi i}\frac{df}{dz}=q\frac{df}{dq}=\sum_{n=1}^{\infty}na_nq^n.
\]
Taking the derivative of both sides of the equation (\ref{trans_mod}), we see that the differential operator $\D$ satisfies
\begin{equation}\label{trans_Df}
(\D f|_{k+2}\gamma)(z)=\chi(\gamma)\D f(z)+\chi(\gamma)\frac{k}{2\pi i}\frac{c}{cz+d}f(z).
\end{equation}
Hence $\D f$ is not a modular form but a quasimodular form. Recall that a quasimodular form  of weight $k$ is a function satisfying the transformation property
\[
\left(f|_k\gamma\right)(z)=\chi(\gamma)\sum_{0\leq i\leq p} F_i(z)\left(\frac{c}{cz+d}\right)^i \text{ for every }\gamma=\begin{pmatrix}
a&b\\
c&d
\end{pmatrix} \in\Gamma_0(N),
\]
where each $F_i(z)$ is a holomorphic function defined on the upper half plane and is holomorphic at cusps. If $F_p(z)\neq 0$, we say that $f(z)$ is of depth $p$.  By repeatedly taking derivatives of both sides of the equation (\ref{trans_Df}), we see that if $f$ is a modular form of weight $k$, then $\D^p f$ is a quasimodular form of weight $k+2p$ and depth $p$. By (\ref{trans_E2}), the function $E_2$ offers a typical example of quasimodular forms of weight $2$ and depth 1. Indeed, every quasimodular form is a polynomial in $E_2$ with modular coefficients (for more detail see \cite{MR1363056}).

\section{expressing series as derivatives of Eisenstein series}\label{section_derandser}
Following Zucker's notation in \cite{MR0516762}, we define the following sixteen families of $q$-series:
\[
\begin{aligned}
A_s(q)&=\sum_{n=1}^{\infty}\frac{n^sq^n}{1-q^n}, 
&B_s(q)&=\sum_{n=1}^{\infty}\frac{(-1)^{n-1}n^sq^n}{1-q^n},\\
C_s(q)&=\sum_{n=1}^{\infty}\frac{n^sq^n}{1-q^{2n}},
&D_s(q)&=\sum_{n=1}^{\infty}\frac{(-1)^{n-1}n^sq^n}{1-q^n}, \\
F_s(q)&=\sum_{n=1}^{\infty}\frac{(2n-1)^sq^{2n-1}}{1-q^{2n-1}},
&G_s(q)&=\sum_{n=1}^{\infty}\frac{(-1)^{n-1}(2n-1)^sq^{2n-1}}{1-q^{2n-1}},\\
H_s(q)&=\sum_{n=1}^{\infty}\frac{(2n-1)^sq^{n-\frac12}}{1-q^{2n-1}},
&J_s(q)&=\sum_{n=1}^{\infty}\frac{(-1)^{n-1}(2n-1)^sq^{n-\frac12}}{1-q^{2n-1}},\\
L_s(q)&=\sum_{n=1}^{\infty}\frac{n^sq^{n}}{1+q^{n}},
&M_s(q)&=\sum_{n=1}^{\infty}\frac{(-1)^{n-1}n^sq^{n}}{1+q^{n}},\\
N_s(q)&=\sum_{n=1}^{\infty}\frac{n^sq^{n}}{1+q^{2n}},
&P_s(q)&=\sum_{n=1}^{\infty}\frac{(-1)^{n-1}n^sq^{n}}{1+q^{2n}},\\
Q_s(q)&=\sum_{n=1}^{\infty}\frac{(2n-1)^sq^{2n-1}}{1+q^{2n-1}},
&R_s(q)&=\sum_{n=1}^{\infty}\frac{(-1)^{n-1}(2n-1)^sq^{2n-1}}{1+q^{2n-1}},\\
T_s(q)&=\sum_{n=1}^{\infty}\frac{(2n-1)^sq^{n-\frac12}}{1+q^{2n-1}},
&U_s(q)&=\sum_{n=1}^{\infty}\frac{(-1)^{n-1}(2n-1)^sq^{n-\frac12}}{1+q^{2n-1}}.
\end{aligned}
\]
These $q$-series can be divided into two classes. The first class is 
\[
\left\{A_s(q),B_s(q),C_s(q),D_s(q),F_s(q),H_s(q),L_s(q),M_s(q),Q_s(q),U_s(q)\right\},
\]
and the second class is 
\[
\left\{G_s(q),J_s(q),N_s(q),P_s(q),R_s(q),T_s(q)\right\}.
\]

From now on, $\chi_{-4}$ denotes the Dirichlet character defined by
\[
\chi_{-4}(d):=\left(\frac{-4}{d}\right)=
\left\{
\begin{aligned}
&1 &d\equiv 1 \mod 4,\\
&-1& d\equiv 3 \mod 4,\\
&0&\text{otherwise.}
\end{aligned}\right.
\]
For $q$-series $f=\sum_{n\geq 0}a_nq^n$, the twist $f\otimes \chi_{-4}$ is defined by 
\[
f\otimes \chi_{-4}=\sum_{n\geq 0}\chi_{-4}(n)a_nq^n.
\]
The following proposition provides the basis for classification.
\begin{proposition}\label{prop_expressqseries}
The $q$-series in the first class can be expressed in terms of $A_s(q)$:
\[
\begin{aligned}
B_s(q)&=A_s(q)-2^{s+1}A_s(q^2),
&C_s(q)&=A_s(q)-A_s(q^2),\\
D_s(q)&=2^{s+1}A_s(q^4)-(2^{s+1}+1)A_s(q^2)+A_s(q),
&F_s(q)&=A_s(q)-2^sA_s(q^2),\\
H_s(q^2)&=2^{s}A_s(q^4)-(2^{s}+1)A_s(q^2)+A_s(q),
&L_s(q)&=A_s(q)-2A_s(q^2),\\
\end{aligned}
\]
\[
\begin{aligned}
M_s(q)&=2^{s+2}A_s(q^4)-(2^{s+1}+2)A_s(q^2)+A_s(q),\\
Q_s(q)&=2^{s+1}A_s(q^4)-(2^s+2)A_s(q^2)+A_s(q),\quad
U_s(q^2)=(A_s\otimes \chi_{-4})(q).
\end{aligned}
\]
The $q$-series in the second class can be expressed in terms of $G_s(q)$ and $N_s(q)$: 
\[
\begin{aligned}
J_s(q^2)&=G_s(q)-G_s(q^2),
&P_s(q)&=N_s(q)-2^{s+1}N_s(q^2),\\
R_s(q)&=G_s(q)-2G_s(q^2),
&T_s(q^2)&=N_s(q)-2^sN_s(q^2).
\end{aligned}
\]
\end{proposition}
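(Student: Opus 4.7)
The plan is to derive each identity by a direct manipulation of the underlying $q$-series, relying on just two elementary tools. The first is the parity split: $\sum_{n\geq 1}(-1)^{n-1}a_n=\sum_{n\geq 1}a_n-2\sum_{m\geq 1}a_{2m}$ and $\sum_{n\text{ odd}}a_n=\sum_{n\geq 1}a_n-\sum_{m\geq 1}a_{2m}$. Applied to $a_n=n^s b(q^n)$, the substitution $n\mapsto 2m$ in the even piece generates the factor $2^s$ that appears throughout the proposition. The second is the family of partial-fraction identities $\frac{q^n}{1-q^{2n}}=\frac{q^n}{1-q^n}-\frac{q^{2n}}{1-q^{2n}}$, $\frac{q^n}{1+q^n}=\frac{q^n}{1-q^n}-\frac{2q^{2n}}{1-q^{2n}}$, and $\frac{q^n}{1+q^{2n}}=\frac{q^n}{1-q^{2n}}-\frac{2q^{3n}}{1-q^{4n}}$, each of which is verified by clearing denominators. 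Together they rewrite every denominator appearing in the sixteen series in terms of $1-q^{kn}$ for $k\in\{1,2,4\}$.

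For each identity in the first class I would first invoke a partial fraction to reduce the denominator to a power of $1-q^{\bullet}$, then apply the parity split to isolate the required residue class, and finally rescale the even-index piece via $n\mapsto 2m$ to land on a linear combination of $A_s(q)$, $A_s(q^2)$, and $A_s(q^4)$. For instance, $F_s(q)=\sum_{k\text{ odd}}k^sq^k/(1-q^k)$ becomes $A_s(q)-2^sA_s(q^2)$ after the even-index rescaling; inserting the first partial-fraction identity inside the odd-restricted sum then produces the formula for $H_s(q^2)$; combining both tools handles $B_s$, $C_s$, $D_s$, $L_s$, $M_s$, and $Q_s$ in the same mechanical way.

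The identity $U_s(q^2)=(A_s\otimes\chi_{-4})(q)$ is the only one that uses a character-theoretic observation. Expanding $\frac{q^k}{1+q^{2k}}=\sum_{j\geq 0}(-1)^jq^{k(2j+1)}$ for odd $k$ and using $(-1)^{(k-1)/2}=\chi_{-4}(k)$, one obtains $U_s(q^2)=\sum_{k,\ell\text{ odd}}\chi_{-4}(k)\chi_{-4}(\ell)\,k^s q^{k\ell}$. Since $\chi_{-4}$ is completely multiplicative and vanishes on even arguments, grouping terms by $N=k\ell$ produces $\sum_N\chi_{-4}(N)\sigma_s(N)q^N$, which is exactly the $\chi_{-4}$-twist of the Fourier expansion $A_s(q)=\sum_N\sigma_s(N)q^N$.

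The second class is entirely analogous. Starting from $G_s(q)=\sum_{k\text{ odd}}\chi_{-4}(k)\,k^sq^k/(1-q^k)$ and from $N_s(q)$, a single application of a partial fraction plus a parity split delivers each of $J_s(q^2)$, $R_s(q)$, $P_s(q)$, and $T_s(q^2)$. The main obstacle is purely bookkeeping: the powers of $2$ produced by $n\mapsto 2m$ and the substitutions $q\mapsto q^2,q^4$ must be tracked consistently across roughly fifteen nearly identical computations, but no step demands anything beyond the toolbox above.
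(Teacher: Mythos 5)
Your proposal is correct and follows essentially the same route as the paper: the paper likewise combines the partial-fraction identities (e.g.\ $\frac{q^n}{1-q^{2n}}=\frac{q^n}{1-q^n}-\frac{q^{2n}}{1-q^{2n}}$) with odd/even splits of the summation index (phrased there as the divisor-sum identity $\sum_{d\mid n}(-1)^{d-1}d^s=\sigma_s(n)-2^{s+1}\sigma_s(n/2)$, which is your parity split after expanding the Lambert series), and treats $U_s(q^2)$ by exactly the same geometric-series expansion and identification of $(-1)^{(k-1)/2}$ with $\chi_{-4}(k)$. The only cosmetic difference is that you make the second-class computations and the identity $\frac{q^n}{1+q^{2n}}=\frac{q^n}{1-q^{2n}}-\frac{2q^{3n}}{1-q^{4n}}$ explicit, where the paper merely says they follow the same pattern.
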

\begin{proof}
The proof is quite straightforward. Note that 
\[
A_s(q)=\sum_{n=1}^{\infty}\frac{n^sq^n}{1-q^n}=\sum_{n\geq1}\sigma_s(n)q^n,~\sigma_s(n)=\sum_{d\mid n}d^s.
\]
The relation $B_s(q)=A_s(q)-2^{s+1}A_s(q^2)$ is equivalent to an identity of divisor functions: $\sum_{d\mid n}(-1)^{d-1}d^s=\sigma_s(n)-2^{s+1}\sigma_s(n/2)$. To see it, it suffices to note that
\[
\sum_{d\mid n}(-1)^{d-1}d^s=\sum_{\substack{d\mid n\\d\text{ odd}}}d^s-\sum_{\substack{d\mid n\\d\text{ even}}}d^s,~\sigma_s(n)=\sum_{\substack{d\mid n\\d\text{ odd}}}d^s+\sum_{\substack{d\mid n\\d\text{ even}}}d^s
\]
and
\[
\sum_{\substack{d\mid n\\d\text{ even}}}d^s=\sum_{\substack{2d_1\mid 2n_1\\n=2n_1}}d^s=\sum_{\substack{d_1\mid n_1\\n=2n_1}}(2d_1)^{s}=2^s\sigma_s(n/2) \text{, so }\sum_{\substack{d\mid n\\d\text{ odd}}}d^s=\sigma_s(n)-2^s\sigma_s(n/2).
\]
The above calculation also demonstrate the identity $F_s(q)=A_s(q)-2^sA_s(q^2)$. 

The two identities
\[
\frac{n^sq^n}{1-q^{2n}}=\frac{n^sq^n}{1-q^n}-\frac{n^sq^{2n}}{1-q^{2n}}\text{ and }\frac{(2n-1)^sq^{2n-1}}{1-q^{4n-2}}=\frac{(2n-1)^sq^{2n-1}}{1-q^{2n-1}}-\frac{(2n-1)^sq^{4n-2}}{1-q^{4n-2}}
\]
imply that $C_s(q)=A_s(q)-A_s(q^2)$ and $H_s(q^2)=F_s(q)-F_s(q^2)$.  And we have $D_s(q^2)=H_s(q^2)-2^sC(q^2)$, hence $D_s(q)=2^{s+1}A_s(q^4)-(2^{s+1}+1)A_s(q^2)+A_s(q)$ by the above discussions. The relation $L_s(q)=C_s(q)-A_s(q^2)=A_s(q)-2A_s(q^2)$ follows by the identity
\[
\frac{n^sq^n}{1+q^n}=\frac{n^sq^n}{1-q^{2n}}-\frac{n^sq^{2n}}{1-q^{2n}}.
\] 
By
\[
\frac{(2n-1)^sq^{2n-1}}{1+q^{2n-1}}=\frac{(2n-1)^sq^{2n-1}}{1-q^{4n-2}}-\frac{(2n-1)^sq^{4n-2}}{1-q^{4n-2}},
\]
we obtain $Q_s(q)=H_s(q^2)-F_s(q^2)=2^{s+1}A_s(q^4)-(2^s+2)A_s(q^2)+A_s(q)$. And  clearly $M_s(q)=2^sL_s(q^2)-Q_s(q)=2^{s+2}A_s(q^4)-(2^{s+1}+2)A_s(q^2)+A_s(q)$. Finally, by a simple calculation,
\[
\begin{aligned}
U_s(q^2)&=\sum_{n=1}^{\infty}\frac{(-1)^{n-1}(2n-1)^sq^{2n-1}}{1+q^{4n-2}}=\sum_{n,m\geq 1}(-1)^{n+m}(2n-1)^sq^{(2n-1)(2m-1)}\\
&=\sum_{\substack{n\geq1\\n\text{ odd}}}(-1)^{\frac{1-n}{2}}\left(\sum_{d\mid n}d^s\right)q^n=\sum_{n\geq 1}\chi_{-4}(n)\sigma_s(n)q^n
\end{aligned}
\]
we deduce that $U_s(q^2)=(A_s\otimes \chi_{-4})(q)$.

The relationships of the series in the second class follow by a similar pattern as before, we omit it.
\end{proof}

By the discussion in Section \ref{section_pre}, $A_{2s-1}(q)$ is the $q$-series of the Eisenstein series of weight $2s$. More precisely,
\[
A_{2s-1}(q)=\frac{B_{2s}}{4s}(1-E_{2s}(z)), q=e^{2\pi iz}.
\]
Recall that if $f\in M_{k}(\operatorname{SL}_2(\mathbb{Z}))$, then $f\otimes \chi_{-4}:=\frac{1}{2i}\left(f(z+\frac{1}{4})-f(z-\frac{1}{4})\right)\in M_k(\Gamma_0(16))$. Hence by Proposition \ref{prop_expressqseries}, when $s\geq 3$ is an odd positive integer, the series in the first class are $q$-series of modular forms (up to some constants). When $s=1$, by the transformation property of $E_2$, the four $q$-series $F_1,L_1,Q_1,U_1$ are $q$-series of modular forms and the rest are $q$-series of quasimodular forms.

For the series in the second class, we have 
\[
E_1^{1,\chi_{-4}}(z)=E_1^{\chi_{-4},1}(z)=\frac{1}{4}+G_0(q)=\frac14+N_0(q),
\]
and for $s\geq 1$,
\[
G_{2s}(q)=E_{2s+1}^{1,\chi_{-4}}(z)+\frac{B_{2s+1,\chi_{-4}}}{4s+2},\quad N_{2s}(q)=E_{2s+1}^{\chi_{-4},1}(z).
\]
By Proposition \ref{prop_expressqseries}, when $s$ is an even non-negative integer, the $q$-series in the second class are modular forms. In the subsequent discussions, when discussing the series in the first class, we focus only on the case where $s$ is odd and when discussing the series in the second class, we focus only on the case where $s$ is even.

The series defined in Section \ref{section_intro} can be expressed by the above series  or their derivatives. To begin with, we introduce a sequence called the central factorial numbers, defined as follows: 
\[
\begin{aligned}
&x^2\prod_{k=1}^{n-1}(x^2-k^2)=\sum_{k=1}^nt(2n,2k)x^{2k},\\
&x\prod_{k=1}^n (x^2-\frac{(2k-1)^2}{4})=\sum_{k=0}^nt(2n+1,2k+1)x^{2k+1}.
\end{aligned}
\]
These numbers are indeed the same as the numbers $\alpha_m(s)$ and $\beta_m(s)$ defined by Zucker in \cite{MR0516762}. They are related by
\[
\begin{aligned}
\alpha_m(s)=t(2s,2s-2m),~\beta_m(s)=4^mt(2s+1,2s-2m+1).
\end{aligned}
\]
For a deeper discussion of the central factorial numbers, we refer the readers to the good survey \cite{MR1002886}. For $q$-series $\sum_{\alpha\in\frac{1}{2}+\mathbb{Z}}a_{\alpha}q^{\alpha}$, the $\D$ operator is defined by
\[
\D \left(\sum_{\alpha\in\frac{1}{2}+\mathbb{Z}}a_{\alpha}q^{\alpha}\right):= \sum_{\alpha\in\frac{1}{2}+\mathbb{Z}}\alpha a_{\alpha}q^{\alpha}.
\]

Using the identity
\[
\frac{1}{(1-q)^k}=\sum_{m\geq 0}\binom{k+m-1}m q^m,
\]
we get
\[
\begin{aligned}
\text{I}^{2p}_{2s}(q)&=\sum_{n\geq 1}\frac{n^{2p}q^{2ns}}{(1-q^{2n})^{2s}}=\frac{1}{(2s-1)!}\sum_{m\geq 0,n\geq 1}\frac{(m+2s-1)!}{m!}n^{2p}q^{2n(m+s)}\\
&=\frac{1}{(2s-1)!}\sum_{m\geq 0,n\geq 1}(m+s)((m+s)^2-(s-1)^2)\cdots((m+s)^2-1)n^{2p}q^{2n(m+s)}\\
&=\frac{1}{(2s-1)!}\sum_{r=1}^{s}t(2s,2r)\sum_{m\geq 0,n\geq 1}(m+s)^{2r-1}n^{2p}q^{2n(m+s)}.
\end{aligned}
\]
By the definition of $t(2s,2r)$, for $0\leq m\leq s-1$, $\sum_{r=0}^{s-1}t(2s,2r)m^{2r-1}=0$, hence the above equals to
\[
\frac{1}{(2s-1)!}\sum_{r=1}^{s}t(2s,2r)\sum_{n,m\geq 1}m^{2r-1}n^{2p}q^{2nm}.
\]
By $\sum_{n,m\geq 1}n^am^bq^{nm}=\D ^{\min\{a,b\}}A_{|a-b|}(q)$,
we get
\[
\text{I}_{2s}^{2p}(q)=\frac{1}{(2s-1)!}\sum_{r=1}^st(2s,2r)\D^{\min\{2p,2r-1\}}A_{|2r-2p-1|}(q^2).
\]
Similarly, we have
\[
\begin{aligned}
\text{I}_{2s+1}^{2p+1}(q)&=\frac{1}{(2s)!}\sum_{m\geq 0,n\geq 1}\frac{(m+2s)!}{m!}n^{2p+1}q^{(2m+2s+1)n}\\
&=\frac{1}{(2s)!}\sum_{m\geq 0,n\geq 1}((m+s+\frac12)^2-(s-\frac12)^2)\cdots ((m+s+\frac12)^2-\frac 14)n^{2p+1}q^{(2m+2s+1)n}\\
&=\frac{1}{(2s)!}\sum_{r=0}^st(2s+1,2r+1)\sum_{n,m\geq 1}2^{-2r}(2m-1)^{2r}n^{2p+1}q^{(2m-1)n}.
\end{aligned}
\]
By
\[
\sum_{m,n\geq 1}(2m-1)^an^bq^{(2m-1)n}=\left\{
\begin{aligned}
~\D^aC_{b-a}(q) \qquad\text{ if }&a<b,\\
~\D^bF_{a-b}(q)  \qquad\text{ if }&a>b,
\end{aligned}\right.
\]
we deduce that
\[
\begin{aligned}
(2s)!\text{I}_{2s+1}^{2p+1}(q)=&\sum_{r=0}^{p}2^{-2r}t(2s+1,2r+1)\D^{2r}C_{2p-2r+1}(q)\\
&+\sum_{r=p+1}^{s}2^{-2r}t(2s+1,2r+1)\D^{2p+1} F_{2r-2p+1}(q).
\end{aligned}
\]
Here, if the summation index exceeds the range, it implies that no summation is performed. Similarly to the above operation, we can express the series in Section \ref{section_intro} as finite sums of the derivatives of the $q$-series defined at the beginning of this section. We list them in the following.
\begin{theorem}\label{theorem_main}
The series defined in Section \ref{section_intro} can be expressed by derivatives of Eisenstein series if they appear in the following list:

\begin{flalign*}
&(2s-1)!\operatorname{\Romannum{1}}_{2s}^{2p}(q)
=\sum_{r=1}^{p}t(2s,2r)\D^{2r-1}A_{2p-2r+1}(q^2)+\sum_{r=p+1}^{s}t(2s,2r)\D^{2p} A_{2r-2p-1}(q^2),& 
\end{flalign*}

\begin{flalign*}
(2s)!\operatorname{\Romannum{1}}_{2s+1}^{2p+1}(q)=&\sum_{r=0}^{p}2^{-2r}t(2s+1,2r+1)\D^{2r}C_{2p-2r+1}(q)\\
&+\sum_{r=p+1}^{s}2^{-2r}t(2s+1,2r+1)\D^{2p+1} F_{2r-2p-1}(q),& 
\end{flalign*}

\begin{flalign*}
&(2s-1)!(-1)^{s-1}\operatorname{\Romannum{2}}_{2s}^{2p}(q)=\sum_{r=1}^{p}t(2s,2r)\D^{2r-1} L_{2p-2r+1}(q^2)+\sum_{r=p+1}^{s}t(2s,2r)\D^{2p} B_{2r-2p-1}(q^2),& 
\end{flalign*}

\begin{flalign*}
(2s)!(-1)^s\operatorname{\Romannum{2}}_{2s+1}^{2p}(q)=&\sum_{r=0}^{p-1}2^{-2r}t(2s+1,2r+1)\D^{2r} N_{2p-2r}(q)\\
&+\sum_{r=p}^{s}2^{-2r}t(2s+1,2r+1)\D^{2p} G_{2r-2p}(q),&
\end{flalign*}

\begin{flalign*}
&(2s-1)! \operatorname{\Romannum{3}}_{2s}^{2p}(q)=\sum_{r=1}^{p}t(2s,2r)\D^{2r-1}F_{2p-2r+1}(q)+\sum_{r=p+1}^{s}t(2s,2r)\D^{2p} C_{2r-2p-1}(q),&
\end{flalign*}

\begin{flalign*}
(2s)!\operatorname{\Romannum{3}}_{2s+1}^{2p+1}(q)=&\sum_{r=0}^{p}t(2s+1,2r+1)\D^{2r}H_{2p-2r+1}(q)\\
&+\sum_{r=p+1}^{s}2^{2p+1-2r}t(2s+1,2r+1)\D^{2p+1} H_{2r-2p-1}(q),&
\end{flalign*}

\begin{flalign*}
&(2s-1)!(-1)^{s-1} \operatorname{\Romannum{4}}_{2s}^{2p}(q)=\sum_{r=1}^{p}t(2s,2r)\D^{2r-1}Q_{2p-2r+1}(q)+\sum_{r=p+1}^{s}t(2s,2r)\D^{2p} D_{2r-2p-1}(q),&
\end{flalign*}

\begin{flalign*}
(2s)!(-1)^s\operatorname{\Romannum{4}}_{2s+1}^{2p}(q)=&\sum_{r=0}^{p-1}t(2s+1,2r+1)\D^{2r} T_{2p-2r}(q)\\
&+\sum_{r=p}^{s}2^{2p-2r}t(2s+1,2m+1)\D^{2p} J_{2r-2p}(q),&
\end{flalign*}

\begin{flalign*}
&(2s-1)! \operatorname{\Romannum{5}}_{2s}^{2p}(q)=\sum_{r=1}^{p}t(2s,2r)\D^{2r-1}B_{2p-2r+1}(q^2)+\sum_{r=p+1}^{s}t(2s,2r)\D^{2p} L_{2r-2p-1}(q^2),&
\end{flalign*}

\begin{flalign*}
(2s)!\operatorname{\Romannum{5}}_{2s+1}^{2p+1}(q)=&\sum_{r=0}^{p}2^{-2r}t(2s+1,2r+1)\D^{2r}D_{2p-2r+1}(q)\\
&+\sum_{r=p+1}^{s}2^{-2r}t(2s+1,2r+1)\D^{2p+1} Q_{2r-2p-1}(q),&
\end{flalign*}

\begin{flalign*}
&(2s-1)!(-1)^{s-1}\operatorname{\Romannum{6}}_{2s}^{2p}(q)=\sum_{r=1}^{p}t(2s,2r)\D^{2r-1} M_{2p-2r+1}(q^2)+\sum_{r=p+1}^{s}t(2s,2r)\D^{2p} M_{2r-2p-1}(q^2),&
\end{flalign*}

\begin{flalign*}
(2s)!(-1)^s\operatorname{\Romannum{6}}_{2s+1}^{2p}(q)=&\sum_{r=0}^{p-1}2^{-2r}t(2s+1,2r+1)\D^{2r} P_{2p-2r}(q)\\
&+\sum_{r=p}^{s}2^{-2r}t(2s+1,2r+1)\D^{2p} R_{2r-2p}(q),&
\end{flalign*}

\begin{flalign*}
&(2s-1)!\operatorname{\Romannum{7}}_{2s}^{2p+1}(q)=\sum_{r=1}^{p}t(2s,2r)\D^{2r-1}G_{2p-2r+2}(q)+\sum_{r=p+1}^st(2s,2r)\D^{2p+1}N_{2r-2p-2}(q),&
\end{flalign*}

\begin{flalign*}
(2s)!\operatorname{\Romannum{7}}_{2s+1}^{2p}(q)=&\sum_{r=0}^{p-1}t(2s+1,2r+1)\D^{2r} J_{2p-2r}(q)\\
&+\sum_{r=p}^{s}2^{2p-2r}t(2s+1,2r+1)\D^{2p} T_{2r-2p}(q),&
\end{flalign*}

\begin{flalign*}
&(2s-1)!(-1)^{s-1}\operatorname{\Romannum{8}}_{2s}^{2p+1}(q)=&\sum_{r=1}^{p}t(2s,2r)\D^{2r-1} R_{2p-2r+2}(q)+\sum_{r=p+1}^{s}t(2s,2r)\D^{2p+1} P_{2r-2p-2}(q),&
\end{flalign*}

\begin{flalign*}
(2s)!(-1)^s\operatorname{\Romannum{8}}_{2s+1}^{2p+1}(q)=&\sum_{r=0}^{p}t(2s+1,2r+1)\D^{2r}U_{2p-2r+1}(q)\\
&+\sum_{r=p+1}^{s}2^{2p-2r+1}t(2s+1,2r+1)\D^{2p+1} U_{2r-2p-1}(q).&
\end{flalign*}
\end{theorem}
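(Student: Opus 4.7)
The plan is to treat the remaining fourteen identities by the same three-step calculation already carried out in full for $\operatorname{\Romannum{1}}_{2s}^{2p}(q)$ and $\operatorname{\Romannum{1}}_{2s+1}^{2p+1}(q)$ in the paragraphs preceding the statement. First, I expand the denominator factor $(1\pm q^{2n})^{-k}$ or $(1\pm q^{2n-1})^{-k}$ by the binomial series, and absorb the $(2p)$-th or $(2p+1)$-th power of the numerator. In the cases with a $+$ in the denominator the expansion introduces a factor $(-1)^{m}$ in the inner sum; after the subsequent re-indexing $m\mapsto m'=m+s$ this becomes $(-1)^{m'-s}=(-1)^{m'}(-1)^{s}$, and the resulting $(-1)^{s}$ combined with the $-1$ coming from $(-1)^{m'}=-(-1)^{m'-1}$ is precisely what produces the $(-1)^{s-1}$ prefactor appearing on the left of the $\operatorname{\Romannum{2}},\operatorname{\Romannum{4}},\operatorname{\Romannum{6}},\operatorname{\Romannum{8}}$ identities.

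Second, I rewrite $\frac{1}{(2s-1)!}\prod_{j=1}^{2s-1}(m+j)$ (respectively $\frac{1}{(2s)!}\prod_{j=1}^{2s}(m+j)$) in terms of the shifted variable $m+s$ (resp.\ $\mu=m+s+\tfrac12$). By the very definition of the central factorial numbers, this polynomial equals $\sum_{r}t(2s,2r)(m+s)^{2r-1}$ (resp.\ $\sum_{r}t(2s+1,2r+1)\mu^{2r}$), and it vanishes at each of the first $s-1$ positive integers (resp.\ at the half-integers $\tfrac12,\tfrac32,\ldots,\tfrac{2s-1}{2}$). This range-extension observation, already used in the model calculation, is the real engine of the proof: it allows the inner summation to be extended from the shifted range down to all positive integers (resp.\ half-integers) without changing the sum, after which the double sum becomes amenable to the standard identifications.

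Third, I identify each resulting double sum with a derivative of one of the basic $q$-series in Section~\ref{section_derandser}, according to three features: whether the sign factor sits on the same variable as the inner power (giving $B,D,M,R,G,J$-type series) or on the opposite variable (giving $L,P,N,T,U$-type); whether the summation runs over all positive integers, positive odd integers, or positive half-integers (selecting the $A,C,F,H$-family); and which of the two exponents is larger (determining the order of~$\D$). In the half-integer cases a further symmetry between $2n-1$ and $2\mu$ (both odd positive integers) produces the factors $2^{2p+1-2r}$ and $2^{2p-2r}$ that appear in the $\operatorname{\Romannum{3}}_{2s+1}^{2p+1},\operatorname{\Romannum{4}}_{2s+1}^{2p},\operatorname{\Romannum{7}}_{2s+1}^{2p},\operatorname{\Romannum{8}}_{2s+1}^{2p+1}$ identities when one is forced to move the leftover $\mu$-power onto the other variable. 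The main difficulty is not conceptual but organisational: for each of the fourteen remaining identities one must correctly track the sign factor produced by the shift $m\to m+s$, the powers of two produced by the half-integer symmetry, and the matching of the double sum to the correct member of the $\{A,B,C,\ldots,U\}$ list; no new idea is required beyond the model calculation.
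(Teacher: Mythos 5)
Your proposal follows exactly the route the paper takes: the paper works out the two model cases $\operatorname{\Romannum{1}}_{2s}^{2p}$ and $\operatorname{\Romannum{1}}_{2s+1}^{2p+1}$ via the binomial expansion, the central-factorial rewriting with the range-extension trick, and the identification of the resulting double sums with derivatives of the sixteen basic $q$-series, and then asserts the remaining identities "similarly." Your accounting of the $(-1)^{s-1}$ sign from the shift $m\mapsto m+s$ and of the powers of two in the half-integer cases is correct, so the proposal is sound and essentially identical in approach.
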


\section{CM evaluation of modular forms and quasimodular forms}\label{section_cmvalue}
\subsection{The general methods}
The $j$-invariant is a function defined on the upper half plane by
\[
j(z)=1728\frac{E_4^3(z)}{E_4^3(z)-E_6^2(z)},
\]
which has $q$-expansion
\[
j(z)=q^{-1}+744+196884q+21493760q^2+\cdots.
\]
It is well-known that if $z$ is a CM point, then $j(z)$ is an algebraic integer. These special values are called singular moduli and there has been extensive research on singular moduli, for example see \cite{MR0772491}.
For a general modular function for subgroup of finite index of $\operatorname{SL}_2(\mathbb{Z})$, its values at CM points are algebraic numbers if its Fourier coefficients are algebraic. The reason is that any such modular function and the function $j(z)$ are algebraically dependent over $\overline{\mathbb{Q}}$. 

Another important function is the Dedekind eta function which is defined by an infinite product:
\[
\eta(z)=q^{1/24}\prod_{n=1}^{\infty}\left(1-q^n\right),\quad q=e^{2\pi iz}.
\]
Ramanujan evaluated some of its values, for example
\[
\begin{aligned}
&\eta(i)=\frac{\Gamma(\frac{1}{4})}{2\pi^{3/4}},\quad
&\eta(2i)&=\frac{1}{2^{3/8}}\frac{\Gamma(\frac{1}{4})}{2\pi^{3/4}},\\
&\eta(4i)=\frac{1}{2^{13/16}(1+\sqrt{2})^{1/4}}\frac{\Gamma(\frac{1}{4})}{2\pi^{3/4}},\quad
&\eta(8i)&=\frac{1}{2^{41/32}}\frac{(-1+\sqrt[4]{2})^{1/2}}{(1+\sqrt{2})^{1/8}}\frac{\Gamma(\frac{1}{4})}{2\pi^{3/4}},
\end{aligned}
\]
see \cite[p.326]{MR1486573}. For a general CM point $z_0$, let $K$ be the quadratic field $\mathbb{Q}(z_0)$ and $D$ be its discriminant. The Chowla–Selberg formula asserts that
\[
\operatorname{Im}(z_0)|\eta(z_0)|^4=\frac{\alpha}{4\pi\sqrt{|D|}}\prod_{r=1}^{|D|-1}\Gamma(r/|D|)^{\left(\frac{D}{r}\right)\frac{w}{2h}},
\]
where $\alpha$ is an algebraic number, $h$ is the class number of $K$, and $w$ is the number of roots of unity, see \cite{MR0215797}. Inspired by the Chowla–Selberg formula, we define the Chowla–Selberg period by
\[
\Omega_K=\frac{1}{\sqrt{2\pi |D|}}\prod_{r=1}^{|D|-1}\Gamma(r/|D|)^{\left(\frac{D}{r}\right)\frac{w}{4h}}.
\]
Now let $f$ be a modular form of weight $k$ with algebraic Fourier coefficients. Then the function $f/\eta^{2k}$ is a modular function for some subgroup of finite index of $\operatorname{SL}_2(\mathbb{Z})$. Hence its value at $z_0$ is a algebraic number. This implies that for any modular form of weight $k$ with algebraic Fourier coefficients, its value at $z_0$ is an algebraic number multiplied by $\Omega_K^k$. However, finding this algebraic number precisely is not an easy task. Van der Poorten and Williams \cite{MR1692895} determined explicitly $|\eta(z)|$ for certain CM points $z$ . The importance of evaluating the Dedekind eta function is that many modular forms can be expressed as combinations of products of the eta function. For example,
\[\begin{aligned}
E_4(z)&=\frac{\eta^{16}(z)}{\eta^8(2z)}+2^8\frac{\eta^{16}(2z)}{\eta^8(z)},\\
E_6(z)&=\frac{\eta^{24}(z)}{\eta^{12}(2z)}-2^5\cdot 3\cdot 5\cdot\eta^{12}(2z)-2^9\cdot 3\cdot 11\cdot \frac{\eta^{12}(2z)\eta^8(4z)}{\eta^8(z)}+2^{13}\cdot \frac{\eta^{24}(4z)}{\eta^{12}(2z)},\\
H(z)&:=\sum_{n=0}^{\infty}\sigma_1(2n+1)q^{2n+1}=\frac{\eta^8(4z)}{\eta^4(2z)},
~G(z):=4E_1^{\chi_{4},1}(z)=\frac{\eta^{10}(2z)}{\eta^4(z)\eta^4(4z)}.
\end{aligned}
\]
The functions $H$ and $G$ will be used in the calculation of the series in the second class defined in Section \ref{section_derandser}. Using those eta expressions, one can determine the values of the above modular forms at CM points if one knows certain values of the eta function. Furthermore, since $E_4^3(z)/\eta^{24}(z)=j(z)$ and $E_6^2(z)/\eta^{24}(z)=j(z)-1728$, it follows that $E_4(z_0)/\eta^8(z_0)$ and $E_6(z_0)/\eta^{12}(z_0)$ are algebraic integers. Under certain conditions, these numbers lie in the ring class fields of orders in the filed $\mathbb{Q}(z_0)$, see \cite[p.195]{MR4502401} and the references given there. This fact, coupled with some numerical computations, allows for the calculation of the values of $E_4$ and $E_6$ at some CM points.  For the algebraic property of $E_2$ at CM points, see \cite[Lemma 2]{MR4372241}.

The modular polynomials can help us determining the values of modular forms at CM points. Given $f\in M_k(\Gamma_0(Nn),\chi)$, define the transformation equation
\[
\Psi^f_n(X,z)=\prod_{\gamma\in\Gamma_0(nN)\setminus \Gamma_0(N)}(X-\overline{\chi}(\gamma)f|_{k}\gamma)=\sum_{j=0}^{d}a_j(z)X^{d-j},
\]
where $d=[\Gamma_0(nN):\Gamma_0(N)]=n\prod_{p\mid n,p\nmid N}(1+p^{-1})$. It can be verified that this definition is well-defined, and furthermore, $a_j(z)\in M_{jk}(\Gamma_0(N),\chi^j)$. If we choose a set of generators for the space $M_{*}(\Gamma_1(N))$, then the coefficients of this polynomial can be expressed in terms of these generators. Thus it gives an algorithm to compute values of $f(z)$ if values of the generators are known. If $f(z)=g(nz)$ for some $g\in M_{k}(\Gamma_0(N),\chi)$ and $\chi$ is a Dirichlet character modulo $N$, the coefficients of $\Psi^f_n$ can be expressed in terms of the Hecke operators on $g(z)$, as illustrated by the following proposition.
\begin{proposition}\label{prpo_mod_poly}
With the above notation and assumptions, if $\gcd(n,N)=1$, the coefficients $a_j(z)$ are recursively given by
\[
a_j(z)=-\frac{1}{j}\sum_{r=1}^j\overline{\chi}^r(n)n^{1-kr}T_n^{*}(g^r)(z)a_{j-r}(z),
\]
where $T_n^{*}$ is related to the usual Hecke operator by
\[
T_n^{*}=\sum_{d^2\mid n}\mu(d)d^{k-2}T_{n/d^2}=\prod_{p\| n}T_p\prod_{\substack{p^{\alpha}\|n\\
\alpha\geq 2}}(T_{p^{\alpha}}-p^{k-2}T_{p^{\alpha-2}}).
\]
\end{proposition}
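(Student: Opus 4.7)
The plan is to combine Newton's identities for elementary symmetric polynomials with a coset analysis that identifies the power sums of the ``conjugates'' $\overline{\chi}(\gamma) f|_k \gamma$ with the primitive Hecke operator $T_n^*$ applied to powers of $g$.

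I would first regard $\Psi^f_n(X,z)$ as a monic polynomial in $X$ of degree $d$ with roots $r_\gamma := \overline{\chi}(\gamma)\, f|_k \gamma$, so that $a_j(z) = (-1)^j e_j(\{r_\gamma\})$. Newton's identity $j\, e_j = \sum_{r=1}^j (-1)^{r-1} p_r\, e_{j-r}$ then yields the recursion
\[
a_j(z) = -\frac{1}{j}\sum_{r=1}^j p_r(z)\, a_{j-r}(z), \qquad p_r(z) := \sum_{\gamma \in \Gamma_0(nN)\setminus\Gamma_0(N)} \overline{\chi}(\gamma)^r \bigl(f|_k\gamma\bigr)^r,
\]
reducing the proposition to proving $p_r(z) = \overline{\chi}(n)^r\, n^{1-kr}\, T_n^*(g^r)(z)$.

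Next, using $(f|_k\gamma)^r = f^r|_{kr}\gamma$ together with $f(z) = g(nz)$, I would write $f^r = n^{-kr/2}\, g^r|_{kr}\alpha$ with $\alpha = \begin{pmatrix} n & 0 \\ 0 & 1\end{pmatrix}$, so that
\[
p_r(z) = n^{-kr/2}\sum_\gamma \overline{\chi}(\gamma)^r\, g^r|_{kr}(\alpha\gamma).
\]
Each matrix $\alpha\gamma = \begin{pmatrix} na_\gamma & nb_\gamma \\ c_\gamma & d_\gamma\end{pmatrix}$ has determinant $n$ and is primitive (the gcd of its entries divides $\gcd(c_\gamma,d_\gamma) = 1$). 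Using $\gcd(n,N) = 1$ one verifies $\alpha^{-1}\Gamma_0(nN)\alpha \subseteq \Gamma_0(N)$, so $\gamma \mapsto \alpha\gamma$ descends to a map into the left $\Gamma_0(N)$-orbits of primitive determinant-$n$ matrices whose lower-left entry is divisible by $N$; the index $d = n\prod_{p\mid n,\, p\nmid N}(1+p^{-1})$ matches the count of primitive upper-triangular representatives $\beta = \begin{pmatrix} a & b \\ 0 & d\end{pmatrix}$ with $ad = n$, $0 \le b < d$, $\gcd(a,b,d) = 1$, so the map is a bijection.

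Since the top-left entry of $\alpha\gamma$ is $na_\gamma$ and $\chi$ is multiplicative, the factor $\overline{\chi}(\gamma)^r = \chi(a_\gamma)^r$ equals $\overline{\chi}(n)^r\, \chi(na_\gamma)^r$; writing $\alpha\gamma = \sigma\beta$ with $\sigma \in \Gamma_0(N)$ and $\beta$ the primitive upper-triangular representative, the residual character $\chi^r(a_\sigma)$ cancels against the factor $\overline{\chi^r(a_\sigma)}$ arising from the $\Gamma_0(N)$-transformation of $g^r \in M_{kr}(\Gamma_0(N),\chi^r)$, leaving $\chi^r(a_\beta)$ attached to each primitive rep. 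Recognizing the resulting expression as the defining sum $T_n^* g^r = n^{kr/2-1}\sum_\beta \chi^r(a_\beta)\, g^r|_{kr}\beta$ over primitive $\beta$ yields $p_r = \overline{\chi}(n)^r\, n^{1-kr}\, T_n^*(g^r)$. The equivalent description $T_n^* = \sum_{d^2\mid n}\mu(d)d^{k-2}T_{n/d^2}$ then follows by Möbius inverting the stratification $T_n = \sum_{d^2\mid n} d^{k-2} T_{n/d^2}^*$ obtained by grouping the $T_n$-matrices of determinant $n$ according to the gcd of their entries. Substitution into the Newton recursion finishes the proof.

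The main obstacle I anticipate is the coset bijection together with the character-compatibility check: making explicit the reduction $\alpha\gamma = \sigma\beta$ and verifying that the Dirichlet character contributions combine correctly requires careful bookkeeping. The hypothesis $\gcd(n,N)=1$ is used decisively, both in ensuring that $\chi(n)$ is well defined and in matching the index $[\Gamma_0(N):\Gamma_0(nN)]$ with the count of primitive Hecke representatives of determinant $n$.
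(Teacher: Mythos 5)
Your proposal follows essentially the same route as the paper: Newton's identities reduce the recursion to identifying the power sums $p_r$ with $\overline{\chi}(n)^r n^{1-kr}T_n^*(g^r)$, which both you and the author establish via the bijection $\gamma\mapsto\begin{pmatrix}n&0\\0&1\end{pmatrix}\gamma$ between $\Gamma_0(nN)\setminus\Gamma_0(N)$ and $\Gamma_0(N)\setminus\Gamma_n^*(N)$, followed by M\"obius inversion of $T_n=\sum_{d^2\mid n}d^{k-2}T^*_{n/d^2}$. Two cosmetic points: the conjugation you need is $\alpha\Gamma_0(nN)\alpha^{-1}\subseteq\Gamma_0(N)$ (not $\alpha^{-1}\Gamma_0(nN)\alpha$), and equality of cardinalities alone does not give a bijection --- the paper supplements it with an explicit injectivity check, which you should add.
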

\begin{proof}
We briefly outline the proof of this proposition. Newton's identities allow expressing the coefficients of the polynomial $\Psi^f_n$ in terms of the sums of the powers of its roots.  If we denote the sum of the $r$-th power of the roots by
\[
p_r(z)=\sum_{\gamma\in\Gamma_0(nN)\setminus\Gamma_0(N)}(\overline{\chi}(\gamma)f|_k\gamma)^r,
\]
Newton's identities state that
\[
ja_j(z)=-\sum_{r=1}^ja_{j-r}(z)p_r(z).
\]
Hence to prove the proposition, it is sufficient to show that
\[
\overline{\chi}(n)n^{1-k}T_n^{*}(g)=\sum_{\gamma\in\Gamma_0(nN)\setminus\Gamma_0(N)}\overline{\chi}(\gamma)f|_{k}\gamma \text{ for }g\in M_k(\Gamma_0(N),\chi) \text{ and }f(z)=g(nz).
\]
Let
\[
\begin{aligned}
\Gamma_n(N)&=\left\{\begin{pmatrix}
a&b\\
c&d
\end{pmatrix}\in \operatorname{M}_2(\mathbb{Z}),~ad-bc=n,N\mid c,\gcd(a,N)=1\right\},\\
\Gamma^{*}_n(N)&=\left\{\begin{pmatrix}
a&b\\
c&d
\end{pmatrix}\in \Gamma_n(N),~\gcd(a,b,c,d)=1\right\}.
\end{aligned}
\]
It is not hard to verify that $\Gamma_0(N)$ acts from left on $\Gamma_n(N)$ and $\Gamma^{*}_n(N)$. Recall that the usual Hecke operator of level $N$ is defined by
\[
T_ng=n^{k/2-1}\sum_{\gamma\in\Gamma_0(N)\setminus\Gamma_n(N)}\overline{\chi}(\gamma)g |_k \gamma,
\]
and we define another Hecke operator by
\[
T_n^{*}g=n^{k/2-1}\sum_{\gamma\in\Gamma_0(N)\setminus\Gamma^{*}_n(N)}\overline{\chi}(\gamma)g |_k \gamma.
\]

By the double coset decompositions
\[
\Gamma^{*}_n(N)=\Gamma_0(N)\begin{pmatrix}
1&0\\
0&n
\end{pmatrix}\Gamma_0(N) \text{ and }\Gamma_n(N)=\bigsqcup_{\substack{ad=n,a\geq 1\\ a\mid d,\gcd(a,N)=1}}\Gamma_0(N)\begin{pmatrix}
a&0\\
0&d
\end{pmatrix}\Gamma_0(N),
\]
these two operators are related by
\[
T_n=\sum_{d^2\mid n}d^{k-2}T_{n/d^2}^{*},
\]
and the inversion of the above relation is 
\[
T_n^{*}=\sum_{d^2\mid n}\mu(d)d^{k-2}T_{n/d^2}.
\]
Since $f(z)=g(nz)=n^{-k/2}g|_k\small{\begin{pmatrix}n&0\\0&1\end{pmatrix}}$, we have
\[
\sum_{\gamma\in\Gamma_0(nN)\setminus\Gamma_0(N)}\overline{\chi}(\gamma)f|_{k}\gamma=n^{-k/2}\sum_{\gamma\in\Gamma_0(nN)\setminus\Gamma_0(N)}\overline{\chi}(\gamma)g|_{k}\small{\begin{pmatrix}n&0\\0&1\end{pmatrix}}\gamma.
\]
It suffices to show that the map
\[
 \Gamma_0(nN)\setminus\Gamma_0(N)\rightarrow\Gamma_0(N)\setminus\Gamma^{*}_n(N):\gamma\rightarrow \begin{pmatrix}n&0\\0&1\end{pmatrix}\gamma
\]
gives a bijection. Let $\gamma=\begin{pmatrix}a&b\\c&d\end{pmatrix}\in \Gamma_0(N)$, note that $\begin{pmatrix}na&nb\\c&d\end{pmatrix}\in \Gamma_n^{*}(N)$. Indeed, it has determinant $n$, we have $N\mid c$ and $\gcd(na,N)=1$ since $ad-bc=1$, and $\gcd(na,nb,c,d)=1$ by $\gcd(n,N)=1$. Now let $\alpha\in \Gamma_0(nN)$, by the identity
\[
\begin{pmatrix}n&0\\0&1\end{pmatrix}\alpha\gamma=\begin{pmatrix}n&0\\0&1\end{pmatrix}\alpha\begin{pmatrix}n^{-1}&0\\0&1\end{pmatrix}\begin{pmatrix}n&0\\0&1\end{pmatrix}\gamma
\]
and $\begin{pmatrix}n&0\\0&1\end{pmatrix}\alpha\begin{pmatrix}n^{-1}&0\\0&1\end{pmatrix}\in \Gamma_0(N)$, the map indeed sends left orbits to left orbits. And since  $|\Gamma_0(nN)\setminus\Gamma_0(N)|=|\Gamma_0(N)\setminus\Gamma^{*}_n(N)|=d$, it remains to show that this map is an injection. If $\begin{pmatrix}n&0\\0&1\end{pmatrix}\gamma_1=\alpha \begin{pmatrix}n&0\\0&1\end{pmatrix}\gamma_2$ for some $\alpha\in \Gamma_0(N)$, thus $\gamma_1=\begin{pmatrix}n^{-1}&0\\0&1\end{pmatrix}\alpha \begin{pmatrix}n&0\\0&1\end{pmatrix}\gamma_2$, and $\begin{pmatrix}n^{-1}&0\\0&1\end{pmatrix}\alpha \begin{pmatrix}n&0\\0&1\end{pmatrix}\in \Gamma_0(nN)$, it completes the proof.
\end{proof} 

We remark that from the above proof and the fact $f(z)\in M_k(\Gamma_0(N),\chi)$, when $\gcd(n,N)=1$, the roots of the polynomial $\Psi^{f|_{V_n}}_n(X,z)$ can be expressed by the set
\[
\left\{\overline{\chi}(n)\overline{\chi}(\gamma)n^{-k/2}(f|_k\gamma)(z),\gamma\in \Gamma_0(N)\setminus \Gamma_n^{*}(N) \right\}.
\]
Modular polynomials can assist us in computing CM values of modular forms.
We illustrate the idea using the space of modular forms for $\operatorname{SL}_2(\mathbb{Z})$ as an example. Since the graded ring of modular forms for $\operatorname{SL}_2(\mathbb{Z})$ is isomorphic to $\mathbb{C}[E_4,E_6]$. To compute CM values of modular forms for $\operatorname{SL}_2(\mathbb{Z})$, it is sufficient to calculate the CM values of $E_4$ and $E_6$. 
By the above remark, $\{(cz_0+d)^{-k}E_k(\gamma z_0),\gamma=\begin{pmatrix}a&b\\c&d\end{pmatrix}\in \operatorname{SL}_2(\mathbb{Z})\setminus \Gamma_n^{*}(1)\}$ are roots of $\Psi^{E_k|_{V_n}}_n(X,z_0)$, $k=4$ and 6.  Since the coefficients of $\Psi^{E_k|_{V_n}}_n(X,z_0)$ are polynomials of $E_4$ and $E_6$, if we know the values of $E_4(z_0)$ and $E_6(z_0)$, solving the equation $\Psi^{E_k|_{V_n}}_n(X,z_0)$ and combining some numerical calculations, we can determine these values. In particular, a system of representatives of the left action of $\operatorname{SL}_2(\mathbb{Z})$ on $\Gamma_n^{*}(1)$ is 
\[
\left\{\begin{pmatrix}
a&b\\
0&d
\end{pmatrix}\in \operatorname{M}_2(\mathbb{Z}),ad=n,0\leq b\leq d-1,~\gcd(a,b,d)=1 \right\}.
\]
Thus if the values of $E_4(z_0)$ and $E_6(z_0)$ are known, with the help of the modular polynomials, we can determine the values of $E_k((az_0+b)/d),$ for $ad=n,~0\leq b\leq d-1$, $k=4$ and 6.

Assume that we have calculated the values of $E_4(z_0)$ and $E_6(z_0)$. This time we consider the modular polynomial of $nE_2(nz)-E_2(z)$. We claim that the roots of the polynomial $\Psi_n^{nE_2|_{V_n}-E_2}(X,z)$ are
\[
\left\{n(cz+d)^{-2}E_2(\eta z)-E_2(z)-\frac{6}{\pi i}\frac{c}{cz+d},\eta=\begin{pmatrix}
a&b\\
c&d
\end{pmatrix}\in \operatorname{SL}_2(\mathbb{Z})\setminus \Gamma_n^{*}(1)\right\}.
\]
This is not a trivial fact, since by the definition of the modular polynomials, the roots of $\Psi_n^{nE_2|_{V_n}-E_2}(X,z)$ are $\left\{(cz+d)^{-2}(nE_2(n\gamma z)-E_2(\gamma z)),~\gamma=\begin{pmatrix}
a&b\\
c&d
\end{pmatrix}\in \Gamma_0(n)\setminus \operatorname{SL}_2(\mathbb{Z})\right\}$. By the proof of Proposition \ref{prpo_mod_poly}, there exists a one-to-one correspondence between the two cosets $\Gamma_0(n)\setminus\operatorname{SL}_2(\mathbb{Z})$ and $\operatorname{SL}_2(\mathbb{Z})\setminus\Gamma_n^{*}(1)$, which is given by $\gamma\rightarrow v_n\gamma$, where $v_n$ denotes the matrix $\small\begin{pmatrix}
n&0\\
0&1
\end{pmatrix}$. Therefore, any element $\eta$ in a representative class of the right coset $\operatorname{SL}_2(\mathbb{Z})\setminus \Gamma_n^{*}(1)$ can be represented as $\eta=\tau v_n\gamma$, where $\tau,\gamma\in\operatorname{SL}_2(\mathbb{Z})$.  It is sufficient to verify the identity
\[
n(c_{\eta}z+d_{\eta})^{-2}E_2(\eta z)-E_2(z)-\frac{6}{\pi i}\frac{c_{\eta}}{c_{\eta}z+d_{\eta}}
=(c_{\gamma}z+d_{\gamma})^{-2}(nE_2(v_n\gamma z)-E_2(\gamma z)), 
\]
for $\eta=\tau v_n\gamma$. For notation simplicity, let $j(\gamma,z)=c_{\gamma}z+d_{\gamma}$ for any $\gamma\in \operatorname{M}_{2}(\mathbb{Z})$. Note that $j(\alpha\beta,z)=j(\alpha,\beta z)j(\beta,z)$. Using the transformation property of $E_2$, we get
\[
\begin{aligned}
j(\tau v_n\gamma,z)^{-2}E_2(\tau v_n\gamma z)&=j(\tau v_n\gamma,z)^{-2}j(\tau,v_n\gamma z)^{2}E_2(v_n\gamma z)+\frac{6}{\pi i}c_{\tau}j(\eta,z)^{-2}j(\tau,v_n\gamma z)\\
&=j(\gamma,z)^{-2}E_2(v_n\gamma z)+\frac{6}{\pi i}c_{\tau}j(\tau v_n\gamma,z)^{-1}j(\gamma,z)^{-1}.
\end{aligned}
\]
So it suffices to show that
\[
c_{\eta}j(\eta,z)^{-1}=c_{\gamma}j(\gamma,z)^{-1}+nc_{\tau}j(\eta,z)^{-1}j(\gamma,z)^{-1},
\]
and this follows easily by the assumption $\eta=\tau v_n\gamma$. Thus if $z_0$ is a CM point such that $\eta z_0=z_0$ for some $\eta\in \Gamma_n^{*}(1)$, the above discussion implies that $(nj(\eta,z_0)^{-2}-1)E_2(z_0)-\frac{6}{\pi i}c_{\eta} j(\eta,z_0)^{-1}$ is a root of $\Psi_n^{nE_2|_{V_n}-E_2}(X,z_0)$. This allows us to find the value of $E_2(z_0)$ if we know the values of $E_4(z_0)$ and $E_6(z_0)$. Moreover, knowing the values of $E_2(z_0)$, $E_4(z_0)$ and $E_6(z_0)$, we can calculate the values of $E_2((az_0+b)/d)$, for $ad=n$, $0\leq b\leq d-1$.

For computing the values of modular forms at CM points, we can also utilize the following polynomial. For $f\in M_k(\Gamma_0(N),\chi)$, define 
\[
\Phi^f_n(X,z)=\prod_{\gamma\in\Gamma_0(N)\setminus \Gamma_n^{*}(N)}\left(X-\overline{\chi}(\gamma)f|_k\gamma\right)=\sum_{j=0}^db_j(z)X^{d-j},
\]
the coefficients are given by
\[
b_j(z)=-\frac{1}{j}\sum_{r=1}^jn^{1-kr/2}b_{j-r}(z)T_n^{*}(f^r)(z).
\]
By Proposition \ref{prpo_mod_poly}, if $\gcd(n,N)=1$, the polynomials $\Psi$ and $\Phi$ are related by
\[
\Phi^f_n(X,z)=\chi^d(n)n^{kd/2}\Psi^{f|_{V_n}}_n(\overline{\chi}(n)n^{-k/2}X,z).
\]
This polynomial can also help  us in computing the values of modular forms at CM points, especially when $\gcd(n,N)>1$.

For our purpose, we also need to know CM values of derivatives of modular forms. We define the modified differentiation operator by
\[
\partial:=\partial_k=\D-\frac{k}{4\pi y}.
\]
The operator is essentially the Maass raising operator, scaled by a constant. The function $\partial_k f$ is no longer holomorphic but it transforms like a modular form. More generally, for integer $h\geq 1$, define $\partial_k^h$ by
\[
\partial^h:=\partial_k^h=\partial_{k+2h-2}\circ\partial_{k+2h-4}\circ\cdots\partial_{k+2} \circ\partial_{k},
\]
those operators satisfy that for modular form $f\in M_k(\Gamma_1(N))$,
\[
(\partial^h f|_{k+2h}\gamma)(z)=\partial^h f(z),~\gamma\in\Gamma_1(N).
\]
The modified differential operators and the usual differential operators are related by the following identity, which can be proven by induction,
\begin{equation}\label{equ_Dandpar}
\D^n f=\sum_{r=0}^n\binom nr\frac{(k+r)_{n-r}}{(4\pi y)^{n-r}}\partial^r f,
\end{equation}
where $(a)_m=a(a+1)\cdots(a+m-1)$. Shimura \cite{MR0491519} found that the non-holomorphic derivative $\partial$ preserves rationality property at CM points of modular forms with algebraic Fourier coefficients. He proved that for modular form $f$ of weight $k$, $\partial^r f(z_0)$ is an algebraic multiple of $\Omega_K^{k+2r}$. By (\ref{equ_Dandpar}), we have the following algebraic statement.
\begin{proposition}\label{prop_cmvalues}
Let $f$ be a modular form of weight $k$ with algebraic Fourier coefficients, and let $z_0$ be a CM point and $K=\mathbb{Q}(z_0)$, then
\[
\D^nf(z_0)\in \sum_{0\leq r\leq n}\overline{\mathbb{Q}}\cdot\Omega_K^{k+2r}\pi^{r-n}.
\]
In particular, since $\D \Delta=E_2\Delta$, the above result implies that 
\[
E_2(z_0)-\frac{3}{\pi y}\in \overline{\mathbb{Q}}\cdot\Omega_K^2,~y=\operatorname{Im}(z_0).
\]
\end{proposition}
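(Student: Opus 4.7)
The plan is to combine equation (\ref{equ_Dandpar}) with Shimura's theorem in a purely bookkeeping fashion. First I would apply (\ref{equ_Dandpar}) at the CM point $z_0$:
\[
\D^n f(z_0) = \sum_{r=0}^{n}\binom{n}{r}\frac{(k+r)_{n-r}}{(4\pi y)^{n-r}}\partial^r f(z_0),\qquad y=\operatorname{Im}(z_0).
\]
Because $z_0$ lies in the imaginary quadratic field $K$, its imaginary part $y$ is an algebraic number, so $1/(4\pi y)^{n-r}$ already belongs to $\overline{\mathbb{Q}}\cdot \pi^{r-n}$, and the combinatorial factors $\binom{n}{r}(k+r)_{n-r}$ are rational. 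Invoking Shimura's theorem (quoted just before the proposition) gives $\partial^{r}f(z_0)\in \overline{\mathbb{Q}}\cdot\Omega_K^{k+2r}$, so each summand sits in $\overline{\mathbb{Q}}\cdot\Omega_K^{k+2r}\pi^{r-n}$. Summing over $r$ yields the first claim immediately; no further work is required.

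For the particular consequence about $E_2$, I would specialise to $f=\Delta$ and $n=1$. Here $k=12$ and, by (\ref{equ_Dandpar}),
\[
\partial\Delta = \D\Delta - \frac{12}{4\pi y}\Delta = \left(E_2 - \frac{3}{\pi y}\right)\Delta,
\]
using the well-known identity $\D\Delta = E_2\Delta$. Shimura's theorem gives $\partial\Delta(z_0)\in\overline{\mathbb{Q}}\cdot\Omega_K^{14}$, while $\Delta(z_0)$ is a \emph{nonzero} algebraic multiple of $\Omega_K^{12}$ (nonvanishing because $\Delta$ has no zeros on the upper half plane, and the power of $\Omega_K$ comes from the general principle that a weight-$k$ modular form with algebraic Fourier coefficients takes values in $\overline{\mathbb{Q}}\cdot\Omega_K^{k}$ at CM points, as recalled in the text). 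Dividing yields
\[
E_2(z_0)-\frac{3}{\pi y} = \frac{\partial\Delta(z_0)}{\Delta(z_0)}\in \overline{\mathbb{Q}}\cdot\Omega_K^{14}\cdot\Omega_K^{-12} = \overline{\mathbb{Q}}\cdot\Omega_K^{2},
\]
as required.

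There is no real obstacle here; both statements are essentially immediate consequences of (\ref{equ_Dandpar}) together with Shimura's algebraicity theorem. The only points that require mild care are tracking the weights so that the exponent of $\Omega_K$ on each term works out to $k+2r$, verifying that the $y$-dependent factors contribute only algebraic multiples of $\pi^{r-n}$, and observing that $\Delta(z_0)\neq 0$ so that the division in the second step is legitimate.
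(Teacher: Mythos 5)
Your proposal is correct and follows exactly the route the paper intends: combining equation (\ref{equ_Dandpar}) with Shimura's algebraicity of $\partial^r f$ at CM points (noting $\operatorname{Im}(z_0)\in\overline{\mathbb{Q}}$), and for the $E_2$ statement dividing $\partial\Delta(z_0)\in\overline{\mathbb{Q}}\cdot\Omega_K^{14}$ by the nonvanishing $\Delta(z_0)\in\overline{\mathbb{Q}}\cdot\Omega_K^{12}$. The paper leaves these details implicit, and your write-up supplies them correctly.
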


This proposition does not provide us with the method for explicit computation. In Section \ref{section_higer}, we present an algorithm that enables us to calculate higher-order derivatives of modular forms.
\subsection{Computation: CM values of modular forms}\label{subsec_CMvalues}
In this subsection, we employ the aforementioned methods, particularly the modular polynomial method, to compute the series defined in Section \ref{section_derandser} at the points $z_0=i$, $2i$ and $i/2$. Let $\Omega:=\Omega_{\mathbb{Q}(i)}=\Gamma(\frac{1}{4})^2/4\pi^{3/2}$.

By Proposition \ref{prop_expressqseries} and
\[
A_{2s-1}(q)=\frac{B_{2s}}{4s}(1-E_{2s}(z)), q=e^{2\pi iz},
\]
to determine CM values of series in the first class, we only need to calculate the values of the function $E_{2s}$. And since the graded ring of modular forms of level 1 is isomorphic to $\mathbb{C}[E_4,E_6]$, every $E_{2s}, s\geq 2$ can be expressed by $E_4$ and $E_6$. In particular, let $F_n(z)=\frac{-B_n}{2n(n-2)!}E_n(z)$, Ramanujan \cite{ramanujan1916certain} gave the recurrence relation
\begin{equation}\label{equ_Ramrecurr}
(n-2)(n+5)F_{n+4}=12(F_4F_n+F_6F_{n-2}+\cdots+F_nF_4).
\end{equation}

So it suffices to compute the CM values of $E_4$ and $E_6$. By the transformation property  
\[
E_2\left(-\frac{1}{z}\right)-\frac{3}{\pi\operatorname{Im}(-1/z)}=z^2\left(E_2(z)-\frac{3}{\pi \operatorname{Im}(z)}\right)\text{ and }E_6\left(-\frac{1}{z}\right)=z^6E_6(z)
\]
and since $i$ is fixed under $z\rightarrow -1/z$, we see that $E_2(i)=3/\pi$ and $E_6(i)=0$. We also need one evaluation of the eta function: $\eta(i)^2=\Omega$. Combining $1728\eta^{24}(z)=E_4^3(z)-E_6^2(z)$, we deduce that $E_4(i)=12\Omega^4$.

Using Proposition \ref{prpo_mod_poly}, we can calculate some modular polynomials as follows 
\[
\Psi^{E_4|_{V_2}}_2(X,z)=X^3-\frac{9}{8}E_4(z)X^2+\frac{33}{256}E^2_4(z)X+\frac{121}{1024}E_4^3(z)-\frac{125}{1024}E_6^2(z),
\]
\[
\begin{aligned}
\Psi^{E_6|_{V_2}}_2(X,z)&=X^3-\frac{33}{32}E_6(z)X^2+\left(\frac{1452}{4096}E_6^2(z)-\frac{1323}{4096}E_4^3(z)\right)X\\
&-\frac{1331}{32768}E_6^3(z)+\frac{1323}{32768}E_6(z)E_4^3(z),
\end{aligned}
\]
\[
\Psi^{2E_2|_{V_2}-E_2}_2(X,z)=X^3-\frac{3}{4}E_4(z)X-\frac14 E_6(z).
\]

Substituting $E_4(i)/\Omega^4=12$ and $E_6(i)=0$ into the modular polynomial $\Psi^{E_4|{V_2}}_2$, we get
\[
x^3-\frac{27}{2}x^2+\frac{297}{16}x+\frac{3267}{16}=\frac{1}{16}(4x - 33)^2(x + 3)
=0,
\]
and three roots of this polynomial are $2^{-4}E_4(\frac{i}{2})\Omega^{-4}$,$2^{-4}E_4(\frac{i+1}{2})\Omega^{-4}$ and $E_4(2i)\Omega^{-4}$ by remark after Proposition \ref{prpo_mod_poly}. Some numerical calculations show that
\[
E_4(2i)=\frac{33}{4}\Omega^4,\quad E_4\left(\frac{i}{2}\right)=132\Omega^4,\quad E_4\left(\frac{1+i}{2}\right)=-48\Omega^4.
\]
For our computation of $U_s(i/2)$, we also need the values of $E_s$ at $(1\pm i)/4$. This can be easily derived from the transformation property of the Eisenstein series, for example
\[
E_4\left(\frac{1\pm i}{4}\right)=\left(-\frac{4}{i\pm 1}\right)^4E_4(2i)=-528.
\]

Similar to the above procedure, we can compute the values of $E_2$ and $E_6$. We put some values in the table below. Here $E_2^{*}(z):=E_2(z)-3/(\pi \operatorname{Im}z)$.

\begin{center}
\begin{tabular}{l|l|l|l}
  \toprule
  $z_0$ & $E_2^{*}(z_0)\Omega^{-2}$  & $E_4(z_0)\Omega^{-4}$  & $E_6(z_0)\Omega^{-6}$ \\
  \midrule
  $i$    & 0  & 12 & 0 \\
  $2i$  &  $\frac{3}{2}$  & $\frac{33}{4}$  & $\frac{189}{8}$ \\
  $4i$   & $\frac{9}{8}+\frac{3}{4}\sqrt{2}$ & $\frac{45}{16}\sqrt{2}+\frac{273}{64}$ & $\frac{2079}{256}\sqrt{2}+\frac{6237}{512}$ \\
  $(1\pm i)/4$ &$\mp 12i$ &-528& $\pm 12096 i$\\
  $i/2$ &-6 &132&-1512\\
  $i/4$ &$-18-12\sqrt{2}$ &$720\sqrt{2}+1092$&$-33264\sqrt{2}-49896$\\
  \bottomrule
  
\end{tabular}
\end{center}

For $q$-series in the second class, let
\[
\psi_k=
\left\{
\begin{aligned}
\chi_{-4} \qquad\text{ if }&k\text{ is odd,}\\
1  \qquad\text{ if }&k\text{ is even.}
\end{aligned}\right.
\]
As a graded algebra, $\bigoplus_{k\geq 1}M_{k}(\Gamma_0(4),\psi_k)\cong \mathbb{C}[G,H]$, where $G=2E_{1}^{1,\chi_{-4}}(z)\in M_1(\Gamma_0(4),\chi_{-4})$ and $H=\sum_{n=0}^{\infty}\sigma_1(2n+1)q^{2n+1}=\sum_{n=1}^{\infty}\frac{(2n-1)q^{2n-1}}{1-q^{4n-2}}\in M_2(\Gamma_0(4))$, cf. \cite[p.14]{ono2004web}. This means that every modular form in $M_k(\Gamma_0(4),\psi_k)$ can be written as polynomials in terms of $G$ and $H$. In particular, the two Eisenstein series $E_{2s+1}^{1,\chi_{-4}}(z)$ and $E_{2s+1}^{\chi_{-4},1}(z)$  are linear combinations of $\{G^{2s+1},G^{2s-1}H,G^{2s-3}H^2,\cdots,GH^{s}\}$. There are also recursive formulas for $E_{2s+1}^{1,\chi_{-4}}(z)$ and $E_{2s+1}^{\chi_{-4},1}(z)$ similar to the Ramanujan's recursive formula (\ref{equ_Ramrecurr}), see \cite{MR2670979}. By Proposition \ref{prop_expressqseries}, to evaluate $q$-series in the second class, it is sufficient to evaluate $E_{2s+1}^{1,\chi_{-4}}(z)$ and $E_{2s+1}^{\chi_{-4},1}(z)$, which in turn reduces to evaluating $G(z)$ and $H(z)$. Using the identities
\[
3G^2(z)=4E_2(4z)-E_2(z),~16H(z)=2E_2(2z)-E_2(z)-G^2(z),
\]
and the modular polynomials
\[
\begin{aligned}
\Psi^{G|_{V_2}}_2(X,z)&=X^2-G(z)X+4H(z),\\
\Psi^{H|_{V_2}}_2(X,z)&=X^2+\left(-\frac1{16}G^2(z)+\frac12 H(z)\right)X+\frac 1{16}H^2(z),
\end{aligned}
\]
we calculate some of the values in the table below.
\begin{center}
\begin{tabular}{l|l|l}
  \toprule
  $z_0$ & $G(z_0)\Omega^{-1}$  & $H(z_0)\Omega^{-2}$ \\
  \midrule
  $i$    & $1+\frac{\sqrt{2}}2$  & $(3-2\sqrt{2})2^{-5}$  \\
  $i/2$ &2 &$\frac{1}{8}$ \\
  $i/4$ & $\sqrt{6+4\sqrt{2}}$ &$\frac{\sqrt{2}}{2}$\\
  $2i$ & $2^{-1/4}+2^{-1/2}+2^{-1}$ & $(2^{1/4}-1)^42^{-7}$\\
  \bottomrule
\end{tabular}
\end{center}
\subsection{Computation: higher-order derivatives of modular forms}\label{section_higer}
In this subsection, we provide an algorithm for recursively computing higher-order derivatives of modular forms and quasimodular forms. For the first class of series, computing the values of their higher-order derivatives at CM points only requires knowing the values of $E_2,E_4,E_6$ at CM points. Similarly, for the second class of series, determining the values of their higher-order derivatives at CM points only need knowing the values of $E_2,G,H$ at CM points.

To begin with, we note that the graded ring of quasimodular forms for $\operatorname{SL}_2(\mathbb{Z})$ is isomorphic to $\mathbb{C}[E_2,E_4,E_6]$. The following proposition gives an algorithm to calculate higher-order derivatives of quasimodular forms of level one.
\begin{proposition}\label{prop_diff_one}
Let $f$ be a quasimodular form of weight $k$ for $\operatorname{SL}_2(\mathbb{Z})$ with Fourier coefficients in a number field $\mathbb{K}$. By the structure of the graded ring of quasimodular forms, $f$ can be written as $E_2^{k/2}f_0(E_4E_2^{-2},E_6E_2^{-3})$ with $f_0(x,y)\in \mathbb{K}[x,y]$. Then $n$-th derivative of $f$ can be written as
$\D^n f=E_2^{k/2+n}f_n(E_4E_2^{-2},E_6E_2^{-3})$, and $f_n(x,y)\in \mathbb{K}[x,y]$ are given recursively by
\[
f_{n+1}(x,y)=\partial_{k+2n}\circ \partial_{k+2n-2}\cdots \circ\partial_{k} \left(f_0(x,y)\right),
\]
where $\partial_{k}$ is the  differential operator defined by
\[
\partial_{k}=\frac{k(1-x)}{24}+
\frac{x^2+x-2y}{6}\frac{\partial}{\partial x}+\frac{xy-2x^2+y}{4}\frac{\partial}{\partial y}.
\]
\end{proposition}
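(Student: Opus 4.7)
The plan is to verify the proposition by a direct computation using Ramanujan's differential identities for $E_2, E_4, E_6$, followed by induction on $n$. The induction step reduces to a single identity: $\D f = E_2^{k/2+1}\,\partial_k f_0(x,y)$ where $x = E_4 E_2^{-2}$ and $y = E_6 E_2^{-3}$. Once this is established, iterating gives the claim, since $\D f$ is a quasimodular form of weight $k+2$ in which the polynomial $f_1 := \partial_k f_0 \in \mathbb{K}[x,y]$ plays the role that $f_0$ played for $f$.

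First I would justify the existence and uniqueness of $f_0 \in \mathbb{K}[x,y]$ as follows. Because the graded ring of quasimodular forms for $\operatorname{SL}_2(\mathbb{Z})$ is $\mathbb{C}[E_2,E_4,E_6]$, the form $f$ is an $\mathbb{K}$-linear combination of monomials $E_2^i E_4^j E_6^l$ with $2i + 4j + 6l = k$. The constraint forces $i + 2j + 3l = k/2$, so each such monomial equals $E_2^{k/2}(E_4 E_2^{-2})^j (E_6 E_2^{-3})^l = E_2^{k/2} x^j y^l$. Collecting these gives $f = E_2^{k/2} f_0(x,y)$ with $f_0 \in \mathbb{K}[x,y]$.

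Next I would compute $\D x$ and $\D y$ using Ramanujan's relations
\[
\D E_2 = \tfrac{1}{12}(E_2^2 - E_4), \qquad \D E_4 = \tfrac{1}{3}(E_2 E_4 - E_6), \qquad \D E_6 = \tfrac{1}{2}(E_2 E_6 - E_4^2).
\]
A direct application of the quotient rule yields
\[
\D x = \frac{E_2}{6}\bigl(x^2 + x - 2y\bigr), \qquad \D y = \frac{E_2}{4}\bigl(xy - 2x^2 + y\bigr),
\]
and similarly $\D(E_2^{k/2}) = \tfrac{k}{24}\,E_2^{k/2+1}(1-x)$. The crucial observation is that each of these three derivatives factors cleanly as $E_2$ times an element of $\mathbb{K}[x,y]$; this is what will make the iteration stay inside the polynomial ring.

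Then combining Leibniz's rule with the chain rule gives
\[
\D(E_2^{k/2} f_0(x,y)) = E_2^{k/2+1}\Bigl[\tfrac{k(1-x)}{24}\,f_0 + \tfrac{x^2+x-2y}{6}\,\partial_x f_0 + \tfrac{xy-2x^2+y}{4}\,\partial_y f_0\Bigr],
\]
which is exactly $E_2^{k/2+1}\,\partial_k f_0$ as claimed. From this identity, induction on $n$ finishes the argument: assuming $\D^n f = E_2^{k/2+n} f_n(x,y)$ with $f_n \in \mathbb{K}[x,y]$, the same calculation (with $k$ replaced by $k+2n$ and $f_0$ replaced by $f_n$) yields $\D^{n+1} f = E_2^{k/2+n+1}(\partial_{k+2n} f_n)(x,y)$, so $f_{n+1} = \partial_{k+2n} f_n = \partial_{k+2n}\circ\cdots\circ\partial_k f_0$, still a polynomial since $\partial_{k+2n}$ evidently preserves $\mathbb{K}[x,y]$. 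The only real point of care, which I would flag as the heart of the proof, is verifying that both $\D x$ and $\D y$ are divisible by $E_2$ after substitution; without this, the inductive hypothesis would leak negative powers of $E_2$ and the polynomial structure of $f_n$ would fail.
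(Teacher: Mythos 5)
Your proposal is correct and follows essentially the same route as the paper: both rest on Ramanujan's differential equations for $E_2,E_4,E_6$ and the change of variables $x=E_4E_2^{-2}$, $y=E_6E_2^{-3}$, the only cosmetic difference being that the paper first invokes homogeneity of weights to posit $\D^n f=E_2^{k/2+n}f_n(x,y)$ and then extracts the recursion, whereas you establish the single-step identity $\D\bigl(E_2^{k/2}f_0\bigr)=E_2^{k/2+1}\partial_k f_0$ directly and induct, which yields the polynomiality of $f_n$ and the recursion at once. Your computations of $\D x$, $\D y$ and $\D(E_2^{k/2})$ match the coefficients of the operator $\partial_k$ in the statement, so the argument is complete.
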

\begin{proof}
By direct computation, we get the differential equations
\[
\D E_2=\frac{E_2^2-E_4}{12},\D E_4=\frac{E_2E_4-E_6}{3}, \D E_6= \frac{E_2E_6-E_4^2}{2},
\]
so the differential operator $\D:\mathbb{K}[E_2,E_4,E_6]\rightarrow \mathbb{K}[E_2,E_4,E_6]$ can be written by
\[
\D=\frac{E_2^2-E_4}{12}\frac{\partial}{\partial E_2}+\frac{E_2E_4-E_6}{3}\frac{\partial}{\partial E_4}+\frac{E_2E_6-E_4^2}{2}\frac{\partial}{\partial E_6}.
\]
Since $\D ^nf$ is of weight $k+2n$, by homogeneity, there is a polynomial $f_n(x,y)\in \mathbb{K}[x,y]$ such that $\D ^nf=E_2^{k/2+n}f_n(E_4/E_2^2,E_6/E_2^3)$. Applying the operator $\D$ to $\D ^nf$,  We get 
\[
\begin{aligned}
\D^{n+1}f&=E_2^{k/2+n+1}f_{n+1}(E_4E_2^{-2},E_6E_2^{-3})=\D (E_2^{k/2+n}f_{n}(E_4E_2^{-2},E_6E_2^{-3}))\\
&=\left(\frac{k+2n}{2}\right)E_2^{k/2+n-1}\frac{E_2^2-E_4}{12}f_n(E_4E_2^{-2},E_6E_2^{-3})+E_2^{k/2+n}\D f_n(E_4E_2^{-2},E_6E_2^{-3}).
\end{aligned}
\]
Let $x=E_4E_2^{-2}$, $y=E_6E_2^{-3}$, we have
\[
\frac{\partial}{\partial E_2}=-2E_4E_2^{-3}\frac{\partial}{\partial x}-3E_6E_2^{-4}\frac{\partial}{\partial y},~
\frac{\partial}{\partial E_4}=E_2^{-2}\frac{\partial}{\partial x},~\frac{\partial}{\partial E_6}=E_2^{-3}\frac{\partial}{\partial y},
\]
hence the differential operator can be expressed by
\[
\D =\frac{E_4^2E_2^{-3}+E_4E_2^{-1}-2E_6E_2^{-2}}{6}\frac{\partial}{\partial x}+\frac{E_4E_6E_2^{-4}-2E_4^2E_2^{-3}+E_6E_2^{-2}}{4}\frac{\partial}{\partial y}.
\]
Using this expression, we have
\[
f_{n+1}(x,y)=\frac{k+2n}{24}(1-x)f_n(x,y)+\
\left(\frac{x^2+x-2y}{6}\frac{\partial}{\partial x}+\frac{xy-2x^2+y}{4}\frac{\partial}{\partial y}\right) f_n(x,y).
\]
If we define the differential operator by
\[
\partial_{k+2n}=\frac{(k+2n)(1-x)}{24}+
\frac{x^2+x-2y}{6}\frac{\partial}{\partial x}+\frac{xy-2x^2+y}{4}\frac{\partial}{\partial y},
\]
the above calculation implies that
\[
f_{n+1}(x,y)=\partial_{k+2n}\left(f_n(x,y)\right),
\]
which completes the proof.
\end{proof}
Since for a CM point $z_0$, $E_2(z_0)$ never equals to zero (Proposition \ref{prop_cmvalues}). The above proposition implies that $\D^n f(z_0)=E_2^{k/2+n}(z_0)f_n(E_4(z_0)E_2^{-2}(z_0),E_6(z_0)E_2^{-3}(z_0))$ and the polynomial $f_n$ can be explicitly calculated .

For the second class of series, we note that the graded ring of quasimodular forms for $\Gamma_0(4)$ with the multiplier system $\psi_k$ is isomorphic to $\mathbb{C}[E_2,G,H]$. Similarly as before, we have the following algorithm.
\begin{proposition}\label{prop_diff_two}
Let $f$ be a quasimodular form of weight $k$ for $\Gamma_0(4)$ with the multiplier system $\psi_k$ with Fourier coefficients in a number field $\mathbb{K}$, then $f=E_2^{k/2}f_0(GE_2^{-1/2},HE_2^{-1})$ with $f_0\in \mathbb{K}[x,y]$. We have
$\D^nf=E_2^{k/2+n}f_n(GE_2^{-1/2},HE_2^{-1})$, and $f_n(x,y)\in\mathbb{K}[x,y]$ are given  recursively by
\[
f_{n+1}(x,y)=\partial_{k+2n}\circ \partial_{k+2n-2}\cdots \circ\partial_{k} \left(f_0(x,y)\right),
\]
where $\partial_{k}$ is the  differential operator defined by
\[
\begin{aligned}
\partial_{k}=k\left(-\frac{x^4}{24}-\frac{28}{3}x^2y-\frac{32}{3}y^2+\frac{1}{24}\right)&+\left(\frac{x^5}{24}+\frac{28}{3}x^3y-\frac{x^3}{12}+\frac{32}{3}xy^2+\frac{20}{3}xy+\frac{x}{24}\right)\frac{\partial}{\partial x}\\
&+\left(\frac{x^4y}{12}+\frac{56}{3}x^2y^2+\frac{5}{6}x^2y+\frac{64}{3}y^3-\frac{8}{3}y^2+\frac{y}{12}\right)\frac{\partial}{\partial y}.
\end{aligned}
\]
\end{proposition}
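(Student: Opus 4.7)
The proof will mirror the structure of Proposition~\ref{prop_diff_one}, replacing the ring $\mathbb{C}[E_4,E_6]$ of modular generators by $\mathbb{C}[G,H]$ for the algebra $\bigoplus_{k\geq 1}M_k(\Gamma_0(4),\psi_k)$, and treating $\D$ as a derivation on $\mathbb{K}[E_2,G,H]$. The first task is to derive explicit formulas for $\D E_2$, $\D G$, $\D H$ as polynomials in $E_2,G,H$. We already have $\D E_2=(E_2^2-E_4)/12$. Since $E_4\in M_4(\Gamma_0(4))=\mathbb{C}G^4\oplus\mathbb{C}G^2H\oplus\mathbb{C}H^2$, I would express $E_4$ as a polynomial in $G,H$ by Fourier-coefficient matching, using also the identities $3G^2=4E_2(4z)-E_2(z)$ and $16H=2E_2(2z)-E_2(z)-G^2$ recorded in the paper. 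The form $\D G$ is quasimodular of weight $3$, depth $\leq 1$, character $\chi_{-4}$ on $\Gamma_0(4)$, so it lies in $\mathbb{C}G^3+\mathbb{C}GH+\mathbb{C}E_2G$; similarly $\D H$ lies in the span of $G^4,G^2H,H^2,E_2G^2,E_2H$. In each case the finite list of possible terms is forced by weight, character, and depth, and the coefficients are pinned down by comparing the first few $q$-coefficients.

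With these formulas in hand, I would write $\D$ acting on $\mathbb{K}[E_2,G,H]$ as the first-order operator
\[
\D=\frac{E_2^2-E_4}{12}\frac{\partial}{\partial E_2}+(\D G)\frac{\partial}{\partial G}+(\D H)\frac{\partial}{\partial H},
\]
where $E_4,\D G,\D H$ are now the polynomial expressions obtained above. Introducing the new variables $x=GE_2^{-1/2}$, $y=HE_2^{-1}$ and applying the chain rule gives
\[
\frac{\partial}{\partial E_2}=-\tfrac{1}{2}xE_2^{-1}\frac{\partial}{\partial x}-yE_2^{-1}\frac{\partial}{\partial y},\qquad \frac{\partial}{\partial G}=E_2^{-1/2}\frac{\partial}{\partial x},\qquad \frac{\partial}{\partial H}=E_2^{-1}\frac{\partial}{\partial y}.
\]
Applied to $f=E_2^{k/2}f_0(x,y)$ this yields $\D f=E_2^{k/2+1}\,\partial_k f_0(x,y)$ for a first-order operator $\partial_k$ on $\mathbb{K}[x,y]$. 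The piece multiplying $k$ comes only from differentiating the prefactor $E_2^{k/2}$, producing $k\cdot \D E_2/(2E_2^2)$, which after substitution must equal the stated $k(-x^4/24-28x^2y/3-32y^2/3+1/24)$; this pins down $E_4=E_2^2(x^4+224x^2y+256y^2)$ as a consistency check. The $\partial/\partial x$ and $\partial/\partial y$ contributions come from the change-of-variable pieces of $(\D G)\partial_G+(\D H)\partial_H$ together with the $x,y$ components of $(\D E_2)\partial_{E_2}$, and they must reproduce the remaining coefficients in the statement.

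Finally, since each $\D^n f$ is again of the form $E_2^{k/2+n}f_n(x,y)$ for some polynomial $f_n\in\mathbb{K}[x,y]$, the same computation applies at weight $k+2n$, giving the recursion $f_{n+1}=\partial_{k+2n}f_n$ and hence, by composition, the claimed closed form. The main obstacle is not conceptual but bookkeeping: the explicit determination of $\D G$ and $\D H$ (equivalently, writing $E_4$ in terms of $G,H$), and then the careful change-of-variables computation, must produce the precise rational coefficients that appear in $\partial_k$. Low-weight sanity checks (e.g.\ verifying the formula on $G$ itself, where $k=1$ and $f_0=x$, so that $\partial_1 x$ should match $(\D G)/E_2^{3/2}$) provide a concrete way to catch arithmetic slips.
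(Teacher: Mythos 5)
Your proposal is correct and follows essentially the same route as the paper: the paper's proof simply records the three Ramanujan-type differential equations $\D E_2=-\tfrac{1}{12}G^4-\tfrac{56}{3}G^2H-\tfrac{64}{3}H^2+\tfrac{1}{12}E_2^2$, $\D G=-\tfrac{1}{12}G^3+\tfrac{20}{3}GH+\tfrac{1}{12}GE_2$, $\D H=\tfrac{5}{6}G^2H-\tfrac{8}{3}H^2+\tfrac{1}{6}HE_2$ and then refers to the argument of Proposition \ref{prop_diff_one}, which is exactly the change-of-variables and homogeneity computation you describe. Your consistency checks (e.g.\ $E_4=G^4+224G^2H+256H^2$ and the $k=1$, $f_0=x$ case) are compatible with the paper's formulas.
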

\begin{proof}
Note that
\[
\begin{aligned}
&\D E_2=-\frac{1}{12}G^4-\frac{56}{3}G^2H-\frac{64}{3}H^2+\frac{1}{12}E_2^2,\\
&\D G=-\frac{1}{12}G^3+\frac{20}{3}GH+\frac{1}{12}GE_2,\\
&\D H=\frac{5}{6}G^2H-\frac{8}{3}H^2+\frac{1}{6}HE_2.
\end{aligned}
\]
The proof is similar as before. The details are left to the readers.
\end{proof}

\subsection{Explicit examples}
Using the computational data from the previous two sections, we can obtain the values of all the series given in Section \ref{section_intro} at $z=i$. In this subsection, we give detailed computations for a series using the machinery described above. 
\begin{example}
This example involves the series in the first class. We evaluate the reciprocal sum of hyperbolic functions $\sum_{n=1}^{\infty}n^4\operatorname{cosech}^8(n\pi)$, i.e. $\text{I}^4_8(e^{-\pi})$. By Theorem \ref{theorem_main}, 
\[
\text{I}^4_8(e^{-\pi})=\frac{1}{7!}\left(\D^4A_3-14\D^4 A_1+49\D^3 A_1-36\D A_3\right)(e^{-2\pi}).
\]
By Proposition \ref{prop_diff_one}, with the initial value of $f_0=1$, using the  recursive formula we obtain 
\[
f_3(x,y)=\partial_6\circ\partial_4\circ\partial_2(f_0)=-\frac1{96}x^2 - \frac1{48}x + \frac1{36}y + \frac{1}{288}
\]
and 
\[
f_4(x,y)=\partial_8(f_3)=-\frac{5}{288}x^2 + \frac1{216}xy - \frac5{432}x +\frac5{216}y + \frac{1}{864},
\]
this means
\[
\D^3 E_2=-\frac1{96}E_4^2  + \frac1{36}E_6E_2- \frac1{48}E_4E_2^2 + \frac{1}{288}E_2^4
\]
and
\[
\D^4 E_2= \frac1{216}E_4E_6-\frac{5}{288}E_4^2E_2 +\frac5{216}E_6E_2^2  - \frac5{432}E_4E_2^3 + \frac{1}{864}E_2^5.
\]
From the above expressions and values given in Section \ref{subsec_CMvalues}, we can compute 
\[
\D^3 E_2(i)=-\frac{3}{2}\Omega^8-\frac{9}{4}\frac{\Omega^4}{\pi^2}+\frac{9}{32}\frac{1}{\pi^4}\text{ and }
\D^4 E_2(i)=-\frac{15}{2}\frac{\Omega^8}{\pi}-\frac{15}{4}\frac{\Omega^4}{\pi^3}+\frac{9}{32}\frac1{\pi^5}.
\]
Similarly, we get
\[
\D E_4(i)=12\frac{\Omega^4}{\pi}
\text{ and }
\D^4 E_4(i)=30\Omega^{12}+315\frac{\Omega^8}{\pi^2}+\frac{315}{8}\frac{\Omega^4}{\pi^4}.
\]
Combining the above calculations, we have
\[
\begin{aligned}
&\sum_{n=1}^{\infty}n^4\operatorname{cosech}^8(n\pi)=\frac{2^8}{7!}\text{I}_8^4(e^{-\pi})=\frac{2^8}{7!}\left(\D^4A_3-14\D^4 A_1+49\D^3 A_1-36\D A_3\right)(e^{-2\pi})\\
&=\frac{2}{315}\Omega^{12}+\frac1{15}\frac{\Omega^8}{\pi^2}-\frac29\frac{\Omega^8}{\pi}+\frac{7}{45}\Omega^8+\frac1{120}\frac{\Omega^4}{\pi^4}-\frac19\frac{\Omega^4}{\pi^3}+
\frac{7}{30}\frac{\Omega^4}{\pi^2}-\frac{16}{175}\frac{\Omega^4}{\pi}+\frac{1}{120}\frac{1}{\pi^5}-\frac{7}{240}\frac{1}{\pi^4}.
\end{aligned}
\]
\end{example}

\begin{example}
Using the tools developed in this paper, one can verify the conjectures presented in \cite{xu2022functional}. Here, we provide an example as an illustration. This example addresses the third and fifth equations of Conjecture 1 in \cite{xu2022functional}, as well as the first and second equations of Conjecture 2 in \cite{xu2022functional}. Using our notation, their conjecture can be written as: Let $\Gamma=\Gamma(1/4)$ and $q=e^{-\pi}$, for any $p\in\mathbb{N}^{+}$, there exist $c_p,d_p,g_p,h_p\in\mathbb{Q}$ such that
\[
\operatorname{\Romannum{2}}_2^{4p-2}(q)=c_p\frac{\Gamma^{8p-4}}{\pi^{6p-2}}+d_p\frac{\Gamma^{8p}}{\pi^{6p}}\text{ and }\operatorname{\Romannum{3}}_2^{4p-2}(q)=g_p\frac{\Gamma^{8p-4}}{\pi^{6p-2}}+h_p\frac{\Gamma^{8p}}{\pi^{6p}},
\]
with 
\[
c_p=2^{2-4p}g_p \text{ and }d_p=-2^{2-4p}h_p.
\]

Using Theorem \ref{theorem_main} and Proposition \ref{prop_expressqseries}, we can write the series $\operatorname{\Romannum{2}}_2^{4p-2}(q)$ and $\operatorname{\Romannum{3}}_2^{4p-2}(q)$ as the special values of derivatives of Eisenstein series:
\begin{equation}\label{equation_expE}
\begin{aligned}
\operatorname{\Romannum{2}}_2^{4p-2}(q)&=\frac{B_{4p-2}}{8p-4}\left(4(\D E_{4p-2})(2i)-(\D E_{4p-2})(i)\right),\\
\operatorname{\Romannum{3}}_2^{4p-2}(q)&=\frac{B_{4p-2}}{8p-4}\left(2^{4p-2}(\D E_{4p-2})(i)-(\D E_{4p-2})(i/2)\right).
\end{aligned}
\end{equation}
The above series can be written as rational linear combinations of $E_2, E_4$ and $E_6$ by Proposition \ref{prop_diff_one}, and since the values of $E_2, E_4$ and $E_6$ at the relevant points ($z=i,2i,i/2$) are calculated in Section \ref{subsec_CMvalues}, the first claim follows directly. To establish the second claim, a more detailed analysis is required. 
Recall that the Serre derivative $\vartheta$ sends modular form $f$ of weight $k$ to modular form of weight $k+2$ which is defined by 
\[
\vartheta f=\D f-\frac{k}{12}E_2f.
\]
Note that for modular form of weight $k\equiv 2\mod 4$, its value at $i$ is zero by its transformation property. By replacing $\D E_{4p-2}$ in (\ref{equation_expE}) with $\vartheta E_{4p-2}$ and $E_2$, and combining with the  calculations of $E_2$ at $z=i,i/2$,  we obtain
\begin{align*}
\frac{8p-4}{B_{4p-2}}\operatorname{\Romannum{2}}_2^{4p-2}(q)&=4(\vartheta E_{4p-2})(2i)-(\vartheta E_{4p-2})(i)+\frac{(2p-1)\Gamma^4}{16\pi^3}E_{4p-2}(2i)\\
&+\frac{2p-1}{\pi}E_{4p-2}(2i),\\
\frac{8p-4}{B_{4p-2}}\operatorname{\Romannum{3}}_2^{4p-2}(q)&=2^{4p-2}(\vartheta E_{4p-2})(i)-(\vartheta E_{4p-2})(i/2)+\frac{(2p-1)\Gamma^4}{16\pi^3}(E_{4p-2})(i/2)\\
&-\frac{2p-1}{\pi}E_{4p-2}(i/2).
\end{align*}
The second claim then follows by 
\[2^{4p}(\vartheta E_{4p-2})(2i)=(\vartheta E_{4p-2})(i/2) \text{ and } -2^{4p-2}E_{4p-2}(2i)=E_{4p-2}(i/2).
\]
\end{example}

We can also use the above methods to compute the values of the series at CM points in other quadratic fields. We do not perform specific calculation here. Instead, we list some values in the following table. In the table below, $D$ is the discriminant of the quadratic field $K:=\mathbb{Q}(z_0)$.
\begin{center}
\begin{tabular}{l|l|l|l}

  \toprule
  $z_0$ & $\frac{|D|^{1/2}E_2^{*}(z_0)}{\Omega_K^{2}}$  & $\frac{E_4(z_0)}{\Omega_K^{4}}$  & $\frac{E_6(z_0)}{|D|^{1/2}\Omega_K^{6}}$ \\
  \midrule
  $\frac{1+\sqrt{3}i}{2}$    & 0  & 0 & $24$  \\
  $\frac{1+\sqrt{7}i}{2}$  &  $3$  & 15  & $27$ \\
  $\sqrt{2}i$   & $2$ & 20 & $28$ \\
  $\frac{1+\sqrt{11}i}{2}$ & $8$& 32 & $56$  \\
  $\frac{1+\sqrt{19}i}{2}$ & $24$ & 96 & $216$  \\
  $\frac{1+\sqrt{43}i}{2}$ & 144 & 960 & 4536 \\
  $\frac{1+\sqrt{67}i}{2}$& 456 & 5280 & 46872 \\
  $\frac{1+\sqrt{163}i}{2}$&8688 &640320&40133016\\
  \bottomrule
  
\end{tabular}
\end{center}

\section{reciprocal sums of Fibonacci numbers}\label{section_fibonacci}
\subsection{Generalized Fibonacci zeta function}
Let $\alpha,\beta$ be two algebraic numbers with $0<|\beta|<1$ and $\alpha\beta=\pm1$. We consider the Lucas sequence 
\[
U_n=\frac{\alpha^n-\beta^n}{\alpha-\beta}
\]
and its associated sequence
\[
V_n=\alpha^n+\beta^n.
\]
For example, if $\alpha=\frac{1+\sqrt{5}}{2}$, $\beta=\frac{1-\sqrt{5}}{2}$, then $U_n$ is the Fibonacci number $F_n$ and $V_n$ is the Lucas number $L_n$.

For $s> 0$ and integer $p\geq 0$, we define the generalized Fibonacci zeta function as follows:
\[
\zeta^p_{U}(s)=\sum_{n\geq 1}\frac{n^p}{U_n^s} \text{  and  }\zeta_{V}^p(s)=\sum_{n\geq 1}\frac{n^p}{V_n^s}.
\]
By the exponential growth property of $U_n$ and $V_n$, it is easy to show that the above series are convergent. The function $\zeta^0_F(s)$ was studied in \cite{MR1866357}, especially its meromorphic continuation. Similar methods can also be used to get the meromorphic continuation of $\zeta^p_{U}(s)$ and $\zeta^p_{V}(s)$. In this section, our main focus is on the special values of the above series at positive integers. For example, if $\alpha\beta=-1$ and $s$ is a positive integer, we see 
\[
\begin{aligned}
(\alpha-\beta)^{-s}\sum_{n\geq 1}\frac{n^p}{U_n^s}&=\sum_{n\geq 1}\frac{n^p}{((-\beta)^{-n}-\beta^n)^s}\\
&=\sum_{n\geq 1}\frac{(2n)^p}{(\beta^{-2n}-\beta^{2n})^s}+(-1)^s\sum_{n\geq 1}\frac{(2n-1)^p}{(\beta^{-2n+1}+\beta^{2n-1})^s}\\
&=2^p\sum_{n\geq 1}\frac{n^p\beta^{2ns}}{(1-\beta^{4n})^s}+(-1)^s\sum_{n\geq 1}\frac{(2n-1)^p\beta^{(2n-1)s}}{(1+\beta^{4n-2})^s}\\
&=2^p\operatorname{\Romannum{1}}_s^p(\beta^2)+(-1)^s\operatorname{\Romannum{4}}^p_s(\beta^2).
\end{aligned}
\]
Hence the special values of the generalized Fibonacci zeta function at positive integers coincide with the series introduced in Section \ref{section_intro}. We summarize these results in the lemma below.
\begin{lemma}\label{lemma_zetaFiboa}
Let $s>0$ and $p\geq 0$ be two integers. If $\alpha \beta =1$, then 
\[
\zeta_U^p(s)=(\alpha-\beta)^s\operatorname{\Romannum{1}}_s^p(\beta)\text{ and }\zeta_V^p(s)=\operatorname{\Romannum{2}}_s^p(\beta).
\]
If $\alpha \beta =-1$, then 
\[
\zeta^p_U(s)=(\alpha-\beta)^s\left(2^p\operatorname{\Romannum{1}}_s^p(\beta^2)+(-1)^s\operatorname{\Romannum{4}}^p_s(\beta^2)\right)\text{ and }\zeta^p_V(s)=2^p\operatorname{\Romannum{2}}^p_s(\beta^2)+(-1)^s\operatorname{\Romannum{3}}_s^p(\beta^2).
\]
\end{lemma}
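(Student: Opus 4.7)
The proof is a direct calculation: for each of the four formulas I simply substitute the hypothesis $\alpha\beta=\pm 1$ into the closed forms $U_n=(\alpha^n-\beta^n)/(\alpha-\beta)$ and $V_n=\alpha^n+\beta^n$, factor the dominant power of $\beta^{-1}$ out of each denominator (here $|\beta|<1$ is essential, so that $|\beta^{-n}|$ is the large term and the geometric-type series are convergent), and match the resulting expressions term-by-term with the definitions of $\operatorname{\Romannum{1}}_s^p,\operatorname{\Romannum{2}}_s^p,\operatorname{\Romannum{3}}_s^p,\operatorname{\Romannum{4}}_s^p$ from Section \ref{section_intro}. There is no modular-forms input needed at this stage; it is purely a bookkeeping exercise.

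The first case, $\alpha\beta=1$, is the easier one. Here $\alpha=\beta^{-1}$, so
\[
U_n^s=(\alpha-\beta)^{-s}\bigl(\beta^{-n}-\beta^n\bigr)^s=(\alpha-\beta)^{-s}\beta^{-ns}(1-\beta^{2n})^s,
\]
and an analogous factorization for $V_n^s$. Dividing $n^p$ by these and summing over $n\ge 1$ immediately gives $(\alpha-\beta)^s\operatorname{\Romannum{1}}_s^p(\beta)$ and $\operatorname{\Romannum{2}}_s^p(\beta)$ respectively, matching the series $\sum n^p q^{ns}/(1-q^{2n})^s$ and $\sum n^p q^{ns}/(1+q^{2n})^s$ at $q=\beta$.

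The second case, $\alpha\beta=-1$, requires splitting the summation over $n$ according to parity, since now $\alpha^n=(-1)^n\beta^{-n}$. Writing $n=2m$ or $n=2m-1$, the even-$n$ piece produces, after factoring $\beta^{\pm 2ms}$ from the denominator, a sum of the shape $\sum m^p\beta^{2ms}/(1\mp\beta^{4m})^s$, which is $\operatorname{\Romannum{1}}_s^p(\beta^2)$ or $\operatorname{\Romannum{2}}_s^p(\beta^2)$ at $q=\beta^2$, and the factor $2^p$ arises from $(2m)^p$. The odd-$n$ piece produces $\sum(2m-1)^p\beta^{(2m-1)s}/(1\pm\beta^{4m-2})^s$, which is recognized as $\operatorname{\Romannum{4}}_s^p(\beta^2)$ or $\operatorname{\Romannum{3}}_s^p(\beta^2)$ at $q=\beta^2$; here a sign $(-1)^s$ emerges because for odd $n$ the denominator of $U_n$ is $-(\beta^{-n}+\beta^n)$ (for $\zeta_U$) or $-(\beta^{-n}-\beta^n)$ (for $\zeta_V$), and raising to the $s$-th power produces the claimed $(-1)^s$.

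This is essentially the same computation already exhibited in Section \ref{section_intro} for the classical Fibonacci case ($\alpha=(1+\sqrt{5})/2$, $\beta=-\alpha^{-1}$, $p=0$). The only subtle point is keeping track of the sign conventions: which of $(1\pm q^{2n})$ and $(1\pm q^{2n-1})$ appears in the denominator after factoring, and whether the overall prefactor is $(-1)^s$ or $+1$. This is the one place where one must be careful, but it amounts to a small case check over parities of $n$ and of $s$, so I do not anticipate any real obstacle.
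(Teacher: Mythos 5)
Your proposal is correct and follows exactly the paper's own argument: the paper carries out the same parity split and factorization of $\beta^{-n}$ explicitly for $\zeta_U^p$ in the case $\alpha\beta=-1$ immediately before the lemma, then summarizes all four identities in the lemma statement. Your sign bookkeeping for the odd-$n$ terms and the origin of the $(-1)^s$ and $2^p$ factors matches the paper's computation.
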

 
Combining with the list provided in Theorem \ref{theorem_main}, we immediately obtain the following result.
\begin{corollary}\label{coro_zeta}
Keep the assumptions in Lemma \ref{lemma_zetaFiboa}. If $\alpha \beta =1$, then the numbers $\zeta_U^{2p}(2s)$, $\zeta_U^{2p+1}(2s+1)$, $\zeta_V^{2p}(2s)$, $\zeta_V^{2p}(2s+1)$ are special values of linear combinations of derivatives of Eisenstein series. If $\alpha \beta =-1$, then the numbers $\zeta_U^{2p}(2s)$,  $\zeta_V^{2p}(2s)$ are special values of linear combinations of derivatives of Eisenstein series.
\end{corollary}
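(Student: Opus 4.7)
The plan is to derive this corollary by directly combining Lemma \ref{lemma_zetaFiboa} with the catalog of representations in Theorem \ref{theorem_main}. The key observation is that Lemma \ref{lemma_zetaFiboa} already expresses each $\zeta_U^p(s)$ and $\zeta_V^p(s)$ as a $\overline{\mathbb{Q}}$-linear combination of values of the series $\operatorname{\Romannum{1}}_s^p$, $\operatorname{\Romannum{2}}_s^p$, $\operatorname{\Romannum{3}}_s^p$, $\operatorname{\Romannum{4}}_s^p$ at $\beta$ or $\beta^2$; so the only real content of the corollary is a parity bookkeeping check that the specific $(s,p)$ pairs appearing in the statement land on the list provided in Theorem \ref{theorem_main}.

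First I would dispatch the case $\alpha\beta=1$. By Lemma \ref{lemma_zetaFiboa},
\[
\zeta_U^{2p}(2s)=(\alpha-\beta)^{2s}\operatorname{\Romannum{1}}_{2s}^{2p}(\beta),\qquad \zeta_U^{2p+1}(2s+1)=(\alpha-\beta)^{2s+1}\operatorname{\Romannum{1}}_{2s+1}^{2p+1}(\beta),
\]
\[
\zeta_V^{2p}(2s)=\operatorname{\Romannum{2}}_{2s}^{2p}(\beta),\qquad \zeta_V^{2p}(2s+1)=\operatorname{\Romannum{2}}_{2s+1}^{2p}(\beta),
\]
and each of $\operatorname{\Romannum{1}}_{2s}^{2p}$, $\operatorname{\Romannum{1}}_{2s+1}^{2p+1}$, $\operatorname{\Romannum{2}}_{2s}^{2p}$, $\operatorname{\Romannum{2}}_{2s+1}^{2p}$ is one of the sixteen entries listed in Theorem \ref{theorem_main}. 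Since every entry of that list is, by construction, a linear combination of $\D^j$ applied to the Eisenstein-type $q$-series of Section \ref{section_derandser} (and these $q$-series are themselves Eisenstein series, up to constants, by Proposition \ref{prop_expressqseries} together with the parity discussion immediately following it), the desired conclusion follows for this case.

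The case $\alpha\beta=-1$ is handled by the same mechanism. Lemma \ref{lemma_zetaFiboa} gives
\[
\zeta_U^{2p}(2s)=(\alpha-\beta)^{2s}\left(2^{2p}\operatorname{\Romannum{1}}_{2s}^{2p}(\beta^2)+\operatorname{\Romannum{4}}_{2s}^{2p}(\beta^2)\right),
\]
\[
\zeta_V^{2p}(2s)=2^{2p}\operatorname{\Romannum{2}}_{2s}^{2p}(\beta^2)+\operatorname{\Romannum{3}}_{2s}^{2p}(\beta^2),
\]
and the four series $\operatorname{\Romannum{1}}_{2s}^{2p}$, $\operatorname{\Romannum{2}}_{2s}^{2p}$, $\operatorname{\Romannum{3}}_{2s}^{2p}$, $\operatorname{\Romannum{4}}_{2s}^{2p}$ all appear in Theorem \ref{theorem_main}; invoking that theorem term by term finishes this case.

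There is essentially no technical obstacle: this corollary is a catalog statement whose proof is a one-line invocation of each of the two preceding results. The only substantive point worth commenting on in the written-up proof is \emph{why} the other parity combinations (for instance $\zeta_U^{2p+1}(2s)$ or $\zeta_V^{2p+1}(2s+1)$ in the $\alpha\beta=1$ case) are absent from the statement: the series $\operatorname{\Romannum{1}}_{2s}^{2p+1}(\beta)$ and $\operatorname{\Romannum{2}}_{2s+1}^{2p+1}(\beta)$ are not on the Theorem \ref{theorem_main} list, reflecting the parity constraints forced by the central factorial number expansion used there. I would note this briefly after the main argument so the reader sees that Corollary \ref{coro_zeta} is as strong as one can extract from the methods of Sections \ref{section_derandser} and \ref{section_fibonacci}.
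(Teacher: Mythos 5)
Your proposal is correct and follows exactly the paper's route: the paper derives Corollary \ref{coro_zeta} by immediately combining Lemma \ref{lemma_zetaFiboa} with the catalog in Theorem \ref{theorem_main}, which is precisely your parity bookkeeping (including the observation that $(-1)^{2s}=1$ in the $\alpha\beta=-1$ case). Your added remark explaining why the other parity combinations are absent is a harmless elaboration the paper leaves implicit.
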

We may also consider the series like
\[
\sum_{n\geq1}\frac{(-1)^{n-1}n^p}{U_n^s} \text{ or }\sum_{n\geq1}\frac{(2n-1)^p}{U_{2n-1}^s}.
\]
These series can also be expressed as linear combinations of derivatives of modular forms in some conditions. In this paper, we focus only on the series mentioned in Corollary \ref{coro_zeta}. 

\subsection{Results on algebraic independence}
Expressing the generalized reciprocal sums of the Lucas sequence as finite sums of derivatives of Eisenstein series allows us to explore their transcendence or algebraic independence. To begin with, we require several preliminary results. For algebraic independence of values of linear combinations of quasimodular forms, we have the following results.
\begin{theorem}\label{the_algind}\cite[Theorem 3.5]{MR4648485}
Let the functions $f$, $g$ and $h$ be algebraic linear combinations of quasimodular forms with algebraic Fourier coefficients. If they are algebraically independent, then for $z$ on the upper half plane such that $e^{2\pi iz}$ is an algebraic number, the three numbers $f(z)$, $g(z)$ and $h(z)$ are algebraically independent over $\mathbb{Q}$.
\end{theorem}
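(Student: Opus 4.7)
The plan is to reduce the statement to Nesterenko's celebrated theorem on the algebraic independence of values of the Ramanujan functions $E_2, E_4, E_6$: for any $z$ in the upper half plane with $q = e^{2\pi i z}$ algebraic, one has $\mathrm{trdeg}_{\mathbb{Q}} \mathbb{Q}(q, E_2(z), E_4(z), E_6(z)) \geq 3$, and in particular $E_2(z), E_4(z), E_6(z)$ are algebraically independent over $\overline{\mathbb{Q}}$. So the game is to bridge the assumed functional algebraic independence of $f, g, h$ and the numerical algebraic independence of their values at $z_0$ by a transcendence-degree argument that passes through $E_2, E_4, E_6$.

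First I would set up the functional side. The ring of quasimodular forms of level $1$ with algebraic Fourier coefficients is the polynomial ring $\overline{\mathbb{Q}}[E_2, E_4, E_6]$, of transcendence degree three over $\overline{\mathbb{Q}}$. For general level $N$, every quasimodular form with algebraic Fourier coefficients is integral, hence algebraic, over $\overline{\mathbb{Q}}[E_2, E_4, E_6]$: any modular form of level $N$ is a root of its modular polynomial $\Psi_n^h(X, z)$ from Proposition 2.3, whose coefficients are polynomials in $E_4, E_6$, and a quasimodular form is a polynomial in $E_2$ over modular forms. Consequently the field $L := \overline{\mathbb{Q}}(E_2, E_4, E_6, f, g, h)$, viewed inside the field of meromorphic functions on the upper half plane, has transcendence degree exactly three over $\overline{\mathbb{Q}}$.

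The assumption that $f, g, h$ are algebraically independent now means $\{f, g, h\}$ is also a transcendence basis of $L$, so each $E_i$ ($i = 2, 4, 6$) is algebraic over $\overline{\mathbb{Q}}(f, g, h)$. Clearing denominators yields non-trivial identities $P_i(f, g, h, E_i) \equiv 0$ with $P_i \in \overline{\mathbb{Q}}[X, Y, Z, W]$ of positive degree in $W$. Specializing at $z_0$, if $f(z_0), g(z_0), h(z_0)$ were algebraically dependent over $\mathbb{Q}$, each $E_i(z_0)$ would be algebraic over the field $\overline{\mathbb{Q}}(f(z_0), g(z_0), h(z_0))$, whose transcendence degree is at most two. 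That would contradict Nesterenko's theorem, proving the claim.

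The delicate point, and in my view the main obstacle, is making the specialization step legitimate: the polynomial $P_i(f(z_0), g(z_0), h(z_0), W) \in \overline{\mathbb{Q}}[W]$ may become identically zero if the leading coefficient of $P_i$ in $W$ vanishes at $z_0$. To handle this I would strengthen ``algebraic'' to ``integral'' by working in a finitely generated $\overline{\mathbb{Q}}[f, g, h]$-subalgebra of meromorphic functions containing $f, g, h, E_2, E_4, E_6$, obtained by inverting a suitable nonvanishing function such as an appropriate power of $\eta(z)$ (which has no zero on the upper half plane) to remove extraneous poles. Monic integral relations over $\overline{\mathbb{Q}}[f, g, h]$ specialize faithfully at $z_0$, giving the required algebraic dependence of $E_2(z_0), E_4(z_0), E_6(z_0)$ on $f(z_0), g(z_0), h(z_0)$ and completing the Nesterenko-based contradiction.
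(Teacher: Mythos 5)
The paper does not actually prove this statement: it is imported verbatim from \cite[Theorem 3.5]{MR4648485}, so there is no internal argument to compare against. Judged on its own terms, your overall strategy — reduce everything to Nesterenko's theorem on $E_2,E_4,E_6$ via a transcendence-degree count, using that every quasimodular form of higher level with algebraic Fourier coefficients is algebraic over $\overline{\mathbb{Q}}(E_2,E_4,E_6)$, so that the algebraic independence of $f,g,h$ forces each $E_i$ to be algebraic over $\overline{\mathbb{Q}}(f,g,h)$ — is the right one, and you correctly isolate the only delicate point, namely the specialization of the functional relation $P_i(f,g,h,E_i)\equiv 0$ at $z_0$.

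However, your repair of that step does not work as stated. Inverting $\eta$ (or any nonvanishing function) enlarges the ring but does not make $E_i$ \emph{integral} over $\overline{\mathbb{Q}}[f,g,h]$: the leading coefficient of $P_i$ in $W$ is a specific nonzero polynomial $c(f,g,h)$, and there is no reason it should become a unit, or the relation become monic, after adjoining $\eta^{-1}$. Integrality is a genuinely stronger property than algebraicity and is not conferred by removing poles. Two correct ways to close the gap: (a) observe that $c(f,g,h)$ is a nonzero holomorphic function algebraic over $\overline{\mathbb{Q}}(E_2,E_4,E_6)$; writing its minimal relation $\sum_j a_j(E_2,E_4,E_6)\,c^j=0$ with $a_0\neq 0$, the vanishing $c(z_0)=0$ would force $a_0(E_2(z_0),E_4(z_0),E_6(z_0))=0$, contradicting Nesterenko's theorem (which gives the algebraic independence of $E_2(z_0),E_4(z_0),E_6(z_0)$ over $\overline{\mathbb{Q}}$ when $e^{2\pi i z_0}$ is algebraic); or (b) avoid specializing individual relations altogether: the evaluation map on the finitely generated domain $A=\overline{\mathbb{Q}}[E_2,E_4,E_6,f,g,h]$ (of transcendence degree $3$) has image of transcendence degree at least $3$ by Nesterenko, hence its kernel is a prime of height $0$, i.e.\ zero, so evaluation is injective on $A$ and extends to the fraction field, transporting the functional independence of $f,g,h$ to their values. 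Either patch makes your argument complete; as written, the final step is not justified.
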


The above theorem immediately implies that the series mentioned in Corollary \ref{coro_zeta} are transcendental numbers. To show their algebraic independence, based on the above theorem, we need to decide the algebraic independence of the corresponding functions. Here are some criteria for the algebraic independence of linear combinations of quasimodular forms. 
\begin{lemma}\label{lemma_quasialg}
For each $i>0$ and a quasimodular form $f$, we denote it by $f_i$ to indicate that it is of weight $i$. Let $f_i,g_i,h_i$ be quasimodular forms. If the three functions $f=\sum_{i=1}^uf_i,g=\sum_{i=1}^vg_i,h=\sum_{i=1}^wh_i$ are algebraically dependent over $\mathbb{C}$, then  $f_u$, $g_v$, $h_w$ are algebraically dependent over $\mathbb{C}$.
\end{lemma}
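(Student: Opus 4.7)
The plan is to exploit the fact that the ring of quasimodular forms is graded by weight. Concretely, as recalled in Section~\ref{section_pre} (every quasimodular form is a polynomial in $E_2$ with modular coefficients), quasimodular forms of distinct weights are linearly independent over $\mathbb{C}$, so the algebra they generate is a graded $\mathbb{C}$-algebra. I will use this to extract the highest-weight part of any alleged algebraic relation.

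Suppose $P\in\mathbb{C}[X,Y,Z]\setminus\{0\}$ satisfies $P(f,g,h)=0$. Write
\[
P(X,Y,Z)=\sum_{(a,b,c)}c_{a,b,c}X^aY^bZ^c.
\]
For each monomial, $f^ag^bh^c=(f_u+\text{lower})^a(g_v+\text{lower})^b(h_w+\text{lower})^c$, where ``lower'' stands for quasimodular forms of strictly smaller weight than the leading one. Expanding, the highest weight component of $f^ag^bh^c$ is $f_u^a g_v^b h_w^c$, a quasimodular form of pure weight $au+bv+cw$, and all other terms have strictly smaller weight. Let
\[
N=\max\{\,au+bv+cw : c_{a,b,c}\neq 0\,\},
\]
and define the ``leading'' polynomial
\[
Q(X,Y,Z)=\sum_{au+bv+cw=N}c_{a,b,c}X^aY^bZ^c,
\]
which is nonzero by the definition of $N$.

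Now I collect $P(f,g,h)=0$ by weight. The weight-$N$ homogeneous component of $P(f,g,h)$ equals $Q(f_u,g_v,h_w)$, and the remaining contributions land in weights strictly less than $N$. Since quasimodular forms of different weights are linearly independent, the identity $P(f,g,h)=0$ forces each weight component to vanish; in particular $Q(f_u,g_v,h_w)=0$. As $Q$ is a nonzero polynomial in $\mathbb{C}[X,Y,Z]$, this exhibits an algebraic dependence of $f_u,g_v,h_w$ over $\mathbb{C}$, which is the claim.

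The only nontrivial point is the linear independence of quasimodular forms of different weights over $\mathbb{C}$, i.e.\ the assertion that the algebra they generate is genuinely graded. For level one this is immediate from the isomorphism with $\mathbb{C}[E_2,E_4,E_6]$ recalled above, and in general it follows from the transformation law of quasimodular forms together with the observation that a finite sum $\sum F_k$ of forms of distinct weights $k$ that vanishes identically must, upon applying the slash operator and comparing the resulting polynomials in $c/(cz+d)$ and the scaling factor $(cz+d)^{-k}$, force each $F_k=0$. This is the only step requiring care; once it is in hand, the weight-grading argument above finishes the proof.
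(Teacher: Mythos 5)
Your proof is correct and follows essentially the same route as the paper: extract the leading polynomial $Q$ supported on monomials maximizing $au+bv+cw$, use the weight-grading of the ring of quasimodular forms to conclude $Q(f_u,g_v,h_w)=0$, and note $Q\neq 0$. Your added justification of the linear independence of quasimodular forms of distinct weights is a point the paper simply asserts, but the argument is the same.
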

\begin{proof}
The lemma follows easily by the fact that the ring of quasimodular forms is a graded ring with respect to the weights. Thus the quasimodular forms with different weights are linear independent. If there is a non-trivial algebraic relation $\sum c_{\alpha,\beta,\gamma}f^{\alpha}g^{\beta}h^{\gamma}=0$, let $d$ be the maximum of $\alpha u+\beta v+\gamma w$, then by homogeneity, $\sum_{\alpha u+\beta v+\gamma w=d} c_{\alpha,\beta,\gamma}x^{\alpha}y^{\beta}z^{\gamma}$ is a non-trivial algebraic relation among $f_u,g_v,h_w$.
\end{proof}

The above lemma illustrates that when considering the algebraic independence of linear combinations of quasimodular forms, we only need to consider those with the highest weights. Our series involve higher-order derivatives of modular forms, we also need the following criterion regarding the algebraic independence of higher-order derivatives of modular forms.
\begin{proposition}\label{prop_algindder}\cite[Proposition 4.2]{MR4648485}
Let $u,v,w$ be three positive integers and $a,b,c$ be three non-negative integers. Let $f,g,h$ be three modular forms of weights $u,v,w$ respectively. Suppose that the function $f^{bw-cv}g^{cu-aw}h^{av-bu}$ is not a constant. Then the three functions $\D^af,\D^bg,\D^ch$ are algebraically independent over $\mathbb{C}$.
\end{proposition}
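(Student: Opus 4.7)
The plan is to assume a nontrivial polynomial relation $P(\D^a f, \D^b g, \D^c h) = 0$ and derive a contradiction from the nonconstancy hypothesis by examining its leading part in the $E_2$-filtration on quasimodular forms. Using $\D f = \tfrac{u}{12} E_2 f + \vartheta f$ and $\D E_2 = (E_2^2 - E_4)/12$, a straightforward induction shows
\[
\D^a f = \frac{(u)_a}{12^a}\, E_2^a\, f + (\text{quasimodular part of } E_2\text{-degree} \le a - 1),
\]
where $(u)_a = u(u+1)\cdots(u+a-1)$, and analogously for $\D^b g$ and $\D^c h$. Since $u, v, w \ge 1$, these Pochhammer factors are nonzero, so each monomial $(\D^a f)^\alpha (\D^b g)^\beta (\D^c h)^\gamma$ has top $E_2$-coefficient equal to a nonzero constant times $E_2^{a\alpha + b\beta + c\gamma} f^\alpha g^\beta h^\gamma$.

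Next, because quasimodular forms of distinct weights are linearly independent, I may assume $P$ is homogeneous of some total weight $N$, i.e.\ every monomial satisfies $\alpha(u+2a) + \beta(v+2b) + \gamma(w+2c) = N$. Letting $d$ denote the maximum value of $a\alpha + b\beta + c\gamma$ over the monomials of $P$ with nonzero coefficient, the top $E_2$-part of the relation yields a genuine modular identity
\[
\sum_{i} c_i C_i\, f^{\alpha_i} g^{\beta_i} h^{\gamma_i} = 0,
\]
where all indices satisfy both $u\alpha_i + v\beta_i + w\gamma_i = N - 2d$ and $a\alpha_i + b\beta_i + c\gamma_i = d$, and $c_i C_i \neq 0$.

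These two linear constraints force the differences $(\alpha_i - \alpha_j, \beta_i - \beta_j, \gamma_i - \gamma_j)$ to lie in the kernel of $\begin{pmatrix} u & v & w \\ a & b & c \end{pmatrix}$, which is spanned by $(bw - cv,\, cu - aw,\, av - bu)$. If only one monomial survives in the leading identity, the nonvanishing of $f, g, h$ forces $c_i C_i = 0$, a contradiction. Otherwise, setting $X := f^{bw - cv} g^{cu - aw} h^{av - bu}$ as a meromorphic function on the upper half plane and dividing through by a common monomial factor, the identity becomes a nontrivial polynomial equation in $X$ with complex coefficients. A meromorphic function satisfying such an equation takes only finitely many values and is therefore constant, contradicting the hypothesis.

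The main obstacle I expect is ensuring that the leading-$E_2$-part extraction works cleanly when some of $bw - cv$, $cu - aw$, $av - bu$ are negative; I would resolve this by working throughout in the field of meromorphic functions on the upper half plane, where $X$ is well-defined as a ratio of holomorphic functions, and by clearing denominators before invoking algebraic dependence over $\mathbb{C}$. The other thing that requires care is the homogenization step and the bookkeeping of weights versus $E_2$-degrees, which together produce the crucial two-dimensional linear system whose kernel is spanned by the exponent vector appearing in the hypothesis.
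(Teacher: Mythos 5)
Your argument is correct, and it is essentially the standard depth-filtration argument one would expect: the paper itself offers no proof of this proposition (it only cites Proposition 4.2 of \cite{MR4648485}), but your reduction---extract the top $E_2$-coefficient of a putative relation using $\D^a f=\tfrac{(u)_a}{12^a}E_2^af+(\text{lower depth})$, observe that the surviving exponent vectors differ by elements of the kernel of $\begin{pmatrix}u&v&w\\a&b&c\end{pmatrix}$, and conclude that $X=f^{bw-cv}g^{cu-aw}h^{av-bu}$ is algebraic over $\mathbb{C}$ hence constant---is exactly in the spirit of Lemmas \ref{lemma_quasialg} and \ref{lemma_modalg} of this paper. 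One small point you should make explicit: the kernel is spanned by $(bw-cv,cu-aw,av-bu)$ only when the matrix has rank $2$; but if it has rank $\leq 1$ that vector vanishes and $X\equiv 1$ is constant, so the nonconstancy hypothesis already guarantees rank $2$ (and likewise forces $f,g,h$ to be not identically zero, which you use when discarding the single-monomial case).
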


For the case of $p=0$, we need the following lemma.
\begin{lemma}\label{lemma_modalg}
For each $i>0$ and a modular form $f$, we denote it by $f_i$ to indicate that it is of weight $i$. Let $f_i,g_i,h_i$ be modular forms.  If $f_u^vg_v^{-u}$ is not a constant, then the functions $f=\sum_{i=1}^uf_i,g=\sum_{i=1}^vg_i,h=E_2+\sum_{i=1}^wh_i$ are algebraically independent over $\mathbb{C}$. 
\end{lemma}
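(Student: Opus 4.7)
The plan is to apply the contrapositive of Lemma \ref{lemma_quasialg}: I will show that the top-weight homogeneous components of $f$, $g$, and $h$ are algebraically independent over $\mathbb{C}$, which then forces algebraic independence of $f$, $g$, $h$ themselves. The top-weight components of $f$ and $g$ are the modular forms $f_u$ and $g_v$. In the setting of interest (namely the Fibonacci zeta applications, where $w\le 2$), the top-weight component of $h$ is $E_2+h_2$, with the convention $h_2=0$ if $w<2$; this is a quasimodular form of weight $2$ and depth exactly $1$ because the coefficient of $E_2$ is nonzero.

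First I would establish that $f_u$ and $g_v$ are algebraically independent over $\mathbb{C}$. Any algebraic relation $Q(f_u,g_v)=0$ may be assumed weight-homogeneous, so $Q(x,y)=\sum_{\alpha u+\beta v=d}c_{\alpha\beta}\,x^{\alpha}y^{\beta}$ for some fixed $d$. Writing $d'=\gcd(u,v)$, $u=d'u'$, $v=d'v'$ with $\gcd(u',v')=1$, the valid exponent pairs form an arithmetic progression in $(\alpha,\beta)$; after factoring the minimal common powers of $f_u$ and $g_v$, the relation reduces to $R(t)=0$ for some nonzero $R\in\mathbb{C}[t]$, where $t=f_u^{v'}/g_v^{u'}$. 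Since $t^{d'}=f_u^v g_v^{-u}$ is non-constant by hypothesis, $t$ itself is non-constant (a constant would have a constant $d'$-th power), contradicting $R(t)=0$.

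Next I would promote this to the three-variable statement. Using the structural fact that the ring of quasimodular forms on the ambient congruence subgroup is the polynomial algebra in $E_2$ over the ring of modular forms, any polynomial relation $P(f_u,g_v,E_2+h_2)=0$ expands, via the binomial theorem applied to each $(E_2+h_2)^k$, into a polynomial in $E_2$ with modular-form coefficients. Transcendence of $E_2$ over the ring of modular forms forces every such coefficient to vanish; isolating the coefficient of the top power $E_2^J$ (where $J$ is the top $z$-degree of $P$) yields $P_J(f_u,g_v)=0$, where $P_J$ denotes the $z^J$-coefficient of $P$. The previous step forces $P_J=0$, contradicting the definition of $J$. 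Invoking the contrapositive of Lemma \ref{lemma_quasialg} then delivers algebraic independence of $f$, $g$, $h$.

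The main obstacle is the careful bookkeeping in the first step when $\gcd(u,v)>1$: the auxiliary quantity $t$ is only a $d'$-th root of the hypothesized non-constant function, so one must argue separately that $t$ itself is non-constant as a modular function. The second technical point is cleanly isolating the top power of $E_2$ after the binomial expansion so as to reduce back to the two-variable independence of $f_u,g_v$ rather than getting entangled with cross terms involving $h_2$.
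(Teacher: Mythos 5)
Your first step---the direct proof that $f_u$ and $g_v$ are algebraically independent whenever $f_u^vg_v^{-u}$ is non-constant, via the weight-homogeneous reduction to a polynomial in $t=f_u^{v'}/g_v^{u'}$---is correct and in fact more self-contained than the paper, which simply cites this equivalence from an external lemma. The trouble lies in your overall strategy.

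You reduce the lemma to the algebraic independence of the \emph{top-weight} components of $f$, $g$, $h$ via the contrapositive of Lemma \ref{lemma_quasialg}, and to make the top-weight component of $h$ equal to $E_2+h_2$ you assume $w\le 2$. This is a genuine gap: the lemma is stated for arbitrary $w$, and in the paper's actual applications one has $w>2$. For instance, in the theorem on $\zeta^0_U(2s)$ the third function is (a normalization of) $f_{s_1}=E_{2s_1}(2z_0)+\cdots+a_{s_1}E_2(2z_0)$, so $h=E_2+\sum_{i=1}^{2s_1}h_i$ with $h_{2s_1}\neq 0$ and $w=2s_1$, which exceeds $2$ as soon as $s_1\ge 2$. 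When $w>2$ the top-weight component of $h$ is the modular form $h_w$, and then $f_u,g_v,h_w$ are three modular forms, which are \emph{always} algebraically dependent over $\mathbb{C}$ (the graded ring of modular forms has transcendence degree $2$); the hypothesis of your contrapositive can never be verified, so your argument cannot start. The paper avoids this by reversing the order of the two reductions: it first writes any putative relation $P(f,g,h)=0$ as a polynomial in $E_2$ with coefficients in the graded ring of modular forms (using that $f$, $g$, and $h-E_2$ all lie in that ring), kills the top $E_2$-coefficient by the transcendence of $E_2$ over modular forms to obtain a nontrivial relation in $f$ and $g$ alone, and only \emph{then} passes to top-weight components to conclude that $f_u$ and $g_v$ would be algebraically dependent. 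Your write-up would be repaired by adopting this order; your second step already contains the $E_2$-expansion idea, but it must be applied to $f,g,h$ themselves rather than to their top-weight parts.
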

\begin{proof}
This follows from the algebraic independence of $E_2$ and any modular forms. More precisely, the identity $f_{N-2n}(z)E_2^n(z)+\cdots+f_{N}(z)=0$ implies that each modular form $f_{i}(z)$ is equal to zero. We can express the algebraic relation of $f,g,h$ as a polynomial in terms of $E_2$ and the coefficients are all modular forms. The above discussion indicates that the coefficient in front of the highest degree of $E_2$ is zero in this algebraic relation. This leads to an algebraic relation depending only on $f$ and $g$. When examining the highest degree term in this algebraic relation, we see that $f_u$ and $g_v$ are algebraically dependent. As modular forms, $f_u$ and $g_v$ are algebraically dependent if and only if $f_u^vg_v^{-u}$ is a constant (Cf. \cite[Lemma 3.1]{MR4648485}).
\end{proof}

\subsection{Examples}
For two complex numbers $x$ and $y$, the notation $ x\sim y$ means that $x=a y+b$ for two algebraic numbers $a$ and $b$, $a\neq 0$. When discussing algebraic independence, the roles played by $x$ and $y$ are identical. Utilizing the above results, we can obtain some conclusions regarding the algebraic independence of generalized reciprocal sums of the Lucas sequence. Firstly, by employing Lemma \ref{lemma_modalg}, the algebraic independence for $\zeta^0_U(s)$ and $\zeta^0_V(s)$ can be established. In the case of $\alpha\beta=-1$, the algebraic independence results for $\zeta^0_U(s)$ were proved by Elsner, Shimomura and Shiokawa in \cite{MR2794928}. 
\begin{theorem}
Let $s_1<s_2<s_3$ be three positive integers.
If $\alpha\beta=1$, the three numbers $\zeta_U^0(2s_1)$, $\zeta_U^0(2s_2)$, $\zeta_U^0(2s_3)$ are algebraically independent over $\mathbb{Q}$; The three numbers $\zeta_V^0(s_1),\zeta_V^0(s_2),\zeta_V^0(s_3)$ are algebraically dependent over $\mathbb{Q}$ if and only if at least one of $s_1,s_2,s_3$ is even.

If $\alpha\beta=-1$, the three numbers $\zeta_U^0(2s_1)$, $\zeta_U^0(2s_2)$,$\zeta_U^0(2s_3)$ are algebraically independent over $\mathbb{Q}$ if and only if at least one of $s_1,s_2,s_3$ is even; The three numbers $\zeta_V^0(2s_1)$,$\zeta_V^0(2s_2)$,$\zeta_V^0(2s_3)$  are algebraic independent over $\mathbb{Q}$ if and only if at least one of $s_1,s_2,s_3$ is even.
\end{theorem}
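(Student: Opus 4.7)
The plan is to reduce the four cases to the algebraic-independence criteria of Lemma \ref{lemma_modalg} and Theorem \ref{the_algind}, with the key structural dichotomy being whether each representing function carries a nonzero $E_2$-component in the graded ring of quasimodular forms.

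First I would use Lemma \ref{lemma_zetaFiboa} and Theorem \ref{theorem_main} (with $p=0$), together with Proposition \ref{prop_expressqseries} to expand $B_s, C_s, D_s, \ldots$ in terms of $A_s$ (and analogously in the second class), to express each $\zeta_U^0$ or $\zeta_V^0$ value as an explicit algebraic linear combination of Eisenstein series evaluated at $q=\beta$ (or $q=\beta^2$). The only $E_2$-contribution arises from the $r=1$ summands in Theorem \ref{theorem_main}, with coefficient proportional to $t(2s_i,2)\neq 0$. Using the identities $2E_2(2z)-E_2(z)=G^2+16H$ and $4E_2(4z)-E_2(z)=3G^2$ to decompose in $\mathbb{C}[E_2,G,H]$, the $E_2$-coefficient of each representing function is explicitly computable, and one obtains: (i) in Case 1 the function $\operatorname{\Romannum{1}}_{2s_i}^0$ has nonzero $E_2$-coefficient for every $s_i$; (ii) in Case 2 the function $\operatorname{\Romannum{2}}_{s_i}^0$ is purely modular (a sum of the modular Eisenstein series $G_{2r}$) when $s_i$ is odd and has nonzero $E_2$-coefficient when $s_i$ is even; (iii) in Cases 3 and 4 the sign cancellation $(-1)^{s_i-1}-1$ in the combined $E_2$-contribution of $\operatorname{\Romannum{1}}_{2s_i}^0+\operatorname{\Romannum{4}}_{2s_i}^0$ (resp.\ $\operatorname{\Romannum{2}}_{2s_i}^0+\operatorname{\Romannum{3}}_{2s_i}^0$) forces the total $E_2$-coefficient to vanish precisely for $s_i$ odd.

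Given this dichotomy, the argument splits in two. When all three representing functions are purely modular (which arises in Cases 2, 3, 4 only when every $s_i$ is odd), they lie in a graded ring of modular forms of transcendence degree $2$ over $\mathbb{C}$, hence any three of them are algebraically dependent as functions; the dependence relation has $\mathbb{Q}$-rational coefficients since the Fourier coefficients are rational, and evaluation at the algebraic number $\beta$ or $\beta^2$ preserves this, yielding algebraic dependence of the three $\zeta$-values. When instead at least one representing function carries a nonzero $E_2$-component, I would apply Lemma \ref{lemma_modalg}: by subtracting appropriate algebraic multiples of an $E_2$-containing function from the other two, rearrange the triple into the form $(f,g,E_2+\text{modular})$ with $f,g$ purely modular and of distinct highest weights $u\neq v$; the highest-weight parts $f_u, g_v$ are nonzero scalar multiples of Eisenstein series of distinct weights, and the non-constancy hypothesis $f_u^v g_v^{-u}\notin\mathbb{C}$ reduces to a Bernoulli-number non-proportionality between their leading Fourier coefficients. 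Lemma \ref{lemma_modalg} then delivers algebraic independence of the three functions, and Theorem \ref{the_algind} transfers this to algebraic independence of the three $\zeta$-values over $\mathbb{Q}$.

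The main obstacle I expect is the bookkeeping behind the parity-dependent $E_2$-cancellation in Cases 3 and 4: the two $E_2$-contributions must be expanded through Proposition \ref{prop_expressqseries}, combined with the sign $(-1)^{s_i-1}$ from Theorem \ref{theorem_main}, and verified to cancel precisely for $s_i$ odd and reinforce for $s_i$ even. A secondary delicate point is the sub-case analysis needed when two or all three representing functions share a nonzero $E_2$-coefficient: one should take the middle-index function as $h$ so that the resulting $f,g$ have distinct highest weights, and in degenerate configurations (where the highest-weight Eisenstein series of $f,g$ are accidentally related, e.g.\ $E_8=E_4^2$) one must further pre-process by subtracting a polynomial in a lower-weight representing function to lower the effective top weight and restore non-constancy of $f_u^v/g_v^u$.
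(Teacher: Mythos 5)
Your proposal is correct and follows essentially the same route as the paper: reduce via Lemma \ref{lemma_zetaFiboa} and Theorem \ref{theorem_main} to explicit combinations of Eisenstein series, classify each case by whether the $E_2$-component survives (which is exactly the parity dichotomy, including the sign cancellation $5+(-1)^s$ in the $\alpha\beta=-1$ case), use the transcendence-degree-$2$ argument when all three representing functions are modular, and otherwise subtract multiples of an $E_2$-carrying function to reach the shape required by Lemma \ref{lemma_modalg}, checking non-constancy of $f_u^vg_v^{-u}$ through Bernoulli-number inequalities (with the $B_4=B_8$ exception handled separately) before transferring to values with Theorem \ref{the_algind}. One remark: your conclusion in the $\alpha\beta=1$, $\zeta_V^0$ case (dependence precisely when all three indices are odd) agrees with the paper's proof rather than with its literal statement, which appears to contain a typo (``dependent'' where ``independent'' is meant).
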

\begin{proof}
When $\alpha\beta=1$, let $z_0$ be an point on the upper half plane such that $\beta=e^{2\pi iz_0}$, we have
\[
\begin{aligned}
\zeta_U^0(2s)&=(\alpha-\beta)^{2s}\operatorname{\Romannum{1}}_{2s}^0(\beta)=(\alpha-\beta)^{2s}\sum_{r=1}^{s}t(2s,2r) A_{2r-1}(\beta^2)\\
&=(\alpha-\beta)^{2s}\sum_{r=1}^{s}t(2s,2r) \frac{B_{2r}}{4r}(1-E_{2r}(2z_0))\\
&\sim E_{2s}(2z_0)+\cdots+a_s E_2(2z_0),
\end{aligned}
\]
where $a_s$ is a non-zero constant. By Theorem \ref{the_algind}, to show the algebraic independence of the numbers $\zeta_U^0(2s_1)$, $\zeta_U^0(2s_2)$, $\zeta_U^0(2s_3)$,  we are reduced to proving the  algebraic independence of the functions $f_s:=E_{2s}(2z_0)+\cdots+a_s E_2(2z_0)$, $s=s_1,s_2,s_3$. To use Lemma \ref{lemma_modalg}, we instead focus on the algebraic independence of $\left\{a_{s_3}^{-1}f_{s_3}-a_{s_1}^{-1}f_{s_1},a_{s_2}^{-1}f_{s_2}-a_{s_1}^{-1}f_{s_1},f_{s_1}\right\}$. Using Lemma \ref{lemma_modalg}, it is sufficient to show that the function $E_{2s_3}^{-s_2}E_{2s_2}^{s_3}$ is not a constant. Considering its Fourier coefficients, it suffices to show $B_{2s_3}\neq B_{2s_2}$. By $(-1)^{n-1}B_{2n}=2(2n)!(2\pi)^{-2n}\zeta(2n)$, we have the estimate
\begin{equation}\label{equ_Ber}
\frac{2(2n)!}{(2\pi)^{2n}}<|B_{2n}|<\frac{2(2n)!}{(2\pi)^{2n}(1-2^{1-2n})},
\end{equation}
hence $|B_{2s_3}|>|B_{2s_2}|$ holds with the only exception $B_4=B_8=-1/30$, for more details see \cite[Lemma 5]{MR2593248}. We need to check $f_4$, $f_2$ and $f_1$ more carefully and the algebraic independence of these three functions follows by the algebraic independence of $E_2$, $E_4$ and $E_6$. This finishes the proof of the algebraic independence of $\zeta_U^0(2s_1)$, $\zeta_U^0(2s_2)$, $\zeta_U^0(2s_3)$. We also have for some non-zero constants $b_s$ and $c_s$,
\[
\begin{aligned}
&\zeta_V^0(2s)\sim E_{2s}(2z_0)-4^{s}E_{2s}(4z_0)+\cdots+b_s\left( E_{2}(2z_0)-2E_{2}(4z_0)\right)-2b_s E_2(4z_0)\\
&\zeta_V^0(2s+1)\sim E_{2s+1}^{1,\chi_{-4}}(z_0)+\cdots+c_s E_1^{1,\chi_{-4}}(z_0).
\end{aligned}
\]
From the above expressions, we immediately obtain that the terms with odd indices and those with even indices are algebraically independent. Furthermore, since the terms with odd indices only involve modular forms, by the fact that any three modular forms are algebraically dependent, we obtain that any three terms with odd indices are algebraically dependent. For the algebraic independence among the terms with even indices, using a similar argument as before, it suffices to show that the function
\[
\left(E_{2s_3}(z)-4^{s_3}E_{2s_3}(2z)\right)^{-s_2}\left(E_{2s_2}(z)-4^{s_2}E_{2s_2}(2z)\right)^{s_3}
\]
is not a constant. Considering its Fourier coefficients, it suffices to show that $(4^{s_3}-1)B_{2s_3}\neq (4^{s_2}-1)B_{2s_2}$ and this follows easily by the inequality (\ref{equ_Ber}). By 
\[
B_{2k+1,\chi_{-4}}=(-1)^{k-1}(2k+1)!L(\chi_{-4},2k+1)\left(\frac{2}{\pi}\right)^{2k+1}\text{ and } \frac{2}{3}< L(\chi_{-4},k)<1,
\]
we have the estimate
\[
\frac{2^{2n+2}(2n+1)!}{3\pi^{2n+1}}<|B_{2n+1},\chi_{-4}|<\frac{2^{2n+1}(2n+1)!}{\pi^{2n+1}}.
\]
This inequality implies that $|B_{2i+1,\chi_{-4}}|>|B_{2j+1,\chi_{-4}}|$ for $i>j\geq 0$, and then the functions $E_{2s+1}^{1,\chi_{-4}}(z)$ for different $s$ are mutually algebraically independent.
This proves the algebraic independence among the terms with even indices.

When $\alpha\beta=-1$, let $z_0$ be an point on the upper half plane such that $\beta^2=e^{2\pi iz_0}$, we have
\[
\begin{aligned}
\zeta_U^0(2s)\sim &4^sE_{2s}(4z_0)-(4^s+1+(-1)^s)E_{2s}(2z_0)+E_{2s}(z_0)+\cdots \\
&+d_s\left(4E_{2}(4z_0)-(5+(-1)^s)E_{2}(2z_0)+E_{2}(z_0)\right).
\end{aligned}
\]
for some non-zero constants $d_s$. If $s_1,s_2,s_3$ are odd, the above expressions involve only modular forms, thus any three terms with odd indices are algebraically dependent. If at least one of $s_1,s_2,s_3$ is even, we need to  to consider cases based on the parity of $s_1,s_2,s_3$. 
If only one number is even, for example $s_1$, then we can use Lemma \ref{lemma_modalg}, and we are reduced to showing that the functions 
$g_s=4^sE_{2s}(4z)-4^sE_{2s}(2z)+E_{2s}(z) ~,s=s_3,s_2,$
are algebraically independent, i.e. $g_{s_2}^{s_3}g_{s_2}^{-s_3}$ is not a constant. Considering its Fourier coefficients, it suffices to show $B_{2s_3}\neq B_{2s_2}$, and this has already been proved in the above statements. If two numbers are even, for example $s_1$ and $s_2$, then we consider the algebraic independence of  $\left\{\zeta^0_U(2s_3),\zeta^0_U(2s_2)-a\zeta^0_U(2s_1),\zeta^0_U(2s_1)\right\}$ instead, where the constant $a$ is chosen so that the function corresponding to $\zeta^0_U(2s_2)-a\zeta^0_U(2s_1)$ is a sum of some modular forms. At this point, we can once again use Lemma \ref{lemma_modalg}, and we need to show that the two functions $g_{s_3}$ and $4^{s_2}E_{2s_2}(4z)-(4^{s_2}+2)E_{2s_2}(2z)+E_{2s_2}(z)$ are algebraically independent. Considering their Fourier series, it suffices to show that $B_{2s_3}\neq -B_{2s_2}$, and this has already been proven. The same method can be applied to the remaining cases, which we omit here. We also have
\[
\begin{aligned}
\zeta_V^0(2s)\sim& 4^sE_{2s}(4z)-(1+(-1)^s)E_{2s}(2z)+(-1)^sE_{2s}(z)+\cdots\\
&+e_s\left(4E_{2}(4z)-(1+(-1)^s)E_{2}(2z)+(-1)^sE_{2}(z)\right).
\end{aligned}
\]
The case of $\zeta_V^0(2s)$ is essentially the same as before. We leave the details to the readers.
\end{proof}

In the case of $p>0$, when using Proposition \ref{prop_algindder} to decide the algebraic independence, the following theorem may be useful.
\begin{theorem}[\cite{MR4177529}]\label{theo_eisalgind}
The equation
\[
\prod_{i=1}^nE_{k_i}=\prod_{j=1}^mE_{l_j}
\]
with $k_i\neq l_j$ for any $1\leq i\leq n$, $1\leq j\leq m$ has no solutions, except for those identities given by
\[
E_4^2=E_8,~E_4E_6=E_{10},~E_4E_{10}=E_{14},~E_6E_8=E_{14}.
\]
\end{theorem}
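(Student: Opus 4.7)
The plan is to treat the hypothetical equality $\prod_i E_{k_i}=\prod_j E_{l_j}$ as an identity in the graded ring $M_\ast(\operatorname{SL}_2(\mathbb{Z}))\cong \mathbb{C}[E_4,E_6]$. Comparing weights gives $N:=\sum k_i=\sum l_j$, so both sides lie in $M_N$, and by hypothesis the index multisets are disjoint, so no common factor can be cancelled. I would first dispose of the small weights by direct computation: since $\dim M_8=\dim M_{10}=\dim M_{14}=1$, every Eisenstein monomial of weight $N\in\{8,10,14\}$ is a scalar multiple of every other in $M_N$, and comparing constant terms (each equal to $1$) fixes the scalar to $1$. This recovers exactly the listed identities $E_4^2=E_8$, $E_4 E_6=E_{10}$, $E_4 E_{10}=E_{14}$, $E_6 E_8=E_{14}$; the last two combine to give the non-trivial $E_4 E_{10}=E_6 E_8$, whose index sets $\{4,10\}$ and $\{6,8\}$ are indeed disjoint. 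The remaining small weights $N$ with $\dim M_N\geq 2$ (namely $N=12$ and $N\geq 16$ up to an explicit bound) are handled by expanding both sides in the monomial basis $\{E_4^aE_6^b:4a+6b=N\}$ of $M_N$ and comparing one further Fourier coefficient; one checks directly that no new identity appears.

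For $N$ above that threshold I would run a Fourier-coefficient growth argument. Write $a_k:=-2k/B_k$ for the coefficient of $q$ in $E_k$. Matching $q^1$-coefficients on both sides yields
\[
\sum_i a_{k_i}=\sum_j a_{l_j}.
\]
The classical estimate $|B_{2n}|\sim 2(2n)!/(2\pi)^{2n}$ makes $|a_k|$ decay super-exponentially in $k$, while $\operatorname{sign}(a_{2n})=(-1)^n$. Pairing this with the analogous matching at $q^p$ for small primes $p$, where the contribution of $E_k$ acquires the factor $\sigma_{k-1}(p)=1+p^{k-1}$ and so the relative weighting of large indices shifts drastically, lets one peel off the smallest weight from each side: its contribution must be matched by the smallest weight on the other side, contradicting disjointness unless the multisets are short enough to land among the exceptional identities already treated.

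The main obstacle is converting the ``smallest weight dominates'' heuristic into a rigorous argument, because $|B_k|$ is not monotone at small indices (for instance $|B_4|=|B_8|=1/30$) and its sign alternates, so the naive size comparison need not start immediately. A cleaner workaround, presumably taken in \cite{MR4177529}, is to inject the $p$-adic denominator information from the von Staudt–Clausen theorem: the primes dividing the denominator of $B_k/k$ yield rigid congruence constraints on the multiset $\{k_i\}$ which, combined with the weight equality $\sum k_i=\sum l_j$ and the $q^1$-identity above, reduce the problem to a finite search over a bounded range of $N$, replacing delicate asymptotics by a verifiable computation.
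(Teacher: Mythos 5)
First, note that the paper itself offers no proof of this statement: it is imported verbatim from \cite{MR4177529}, so the only meaningful comparison is with the argument of that reference. Measured as a proof, your proposal has a genuine gap, and it sits exactly where you admit it does: the entire case of large total weight rests on the heuristic that the smallest weight ``dominates'' the coefficient identities at $q^1$ and $q^p$, and this is never made rigorous. The difficulty is more serious than the non-monotonicity of $|B_k|$ that you flag. The $q^1$-identity $\sum_i a_{k_i}=\sum_j a_{l_j}$ is a single linear equation in unbounded non-negative integer multiplicities whose coefficients $a_4,\dots,a_{14}$ (namely $240,\,-504,\,480,\,-264,\,65520/691,\,-24$) are of comparable magnitude and alternating sign, so no term dominates; the equation is genuinely solvable (as it must be, e.g.\ $2a_4=a_8$ and $a_4+a_{10}=a_6+a_8=a_{14}$), and you give no mechanism for excluding spurious solutions. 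At $q^p$ the product structure contributes cross terms such as $\sum_{i<i'}a_{k_i}a_{k_{i'}}$, which your sketch silently drops and which are not negligible for small weights. The von Staudt--Clausen patch does not close the gap: the arithmetic of $a_k=-2k/B_k$ is governed by the \emph{numerators} of the Bernoulli numbers (witness the $691$ in $a_{12}$), about which von Staudt--Clausen says nothing, so the advertised ``rigid congruence constraints'' are not available. Finally, without a rigorous large-$N$ argument there is no explicit threshold at which your finite verification may stop, and even for a fixed weight $N$ with $\dim M_N=d\ge 2$, separating all monomials of that weight requires comparing $d$ coefficients and checking a nondegeneracy condition, not ``one further Fourier coefficient.''

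For the record, the proof in \cite{MR4177529} runs along entirely different lines. By the Rankin--Swinnerton-Dyer theorem all zeros of $E_k$ in the standard fundamental domain lie on the arc $|z|=1$, and an equality of two monomials forces the zero multisets of the two sides to coincide; the heart of the argument is that distinct Eisenstein series cannot share a zero on this arc other than the elliptic points $i$ and $\rho$. Since the index sets are disjoint, every $E_{k_i}$ and $E_{l_j}$ occurring must then vanish only at $i$ and $\rho$, which confines all indices to $\{4,6,8,10,14\}$ and reduces the theorem to a short finite computation. Your dimension-one observation does correctly recover the four listed identities, but that is the easy direction; the substance of the theorem is the exclusion of all other relations, and that is precisely the part your sketch leaves open.
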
	
The above theorem can show the non-existence of certain equalities about Eisenstein series. We can use this theorem to demonstrate the algebraic independence of several quasimodular forms.
\begin{proposition}\label{theo_alg_zetau}
Let $s_1,s_2,s_3$ be three distinct positive integers and $p_1,p_2,p_3$ be three distinct non-negative integers. Suppose that $s_i\geq p_i+4$ or $p_i\geq s_i+3$ and $p_is_j\neq p_js_i$, $(2s_i-1)(p_j-s_j+1)\neq (2s_j-1)(p_i-s_i+1)$, $(2s_i-1)(s_j-p_j)\neq 2p_j(p_i-s_i+1)$ for distinct $i,j\in\{1,2,3\}$. If $\alpha\beta=1$, then the three numbers $\zeta^{2p_i}_U(2s_i)$, $i=1,2,3$ are algebraically independent over $\mathbb{Q}$.
\end{proposition}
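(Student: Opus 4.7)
The plan is to convert the statement into algebraic independence of three explicit quasimodular forms and then apply Proposition~\ref{prop_algindder} to their leading-weight components. Since $\alpha\beta=1$, Lemma~\ref{lemma_zetaFiboa} gives $\zeta_U^{2p_i}(2s_i)=(\alpha-\beta)^{2s_i}\operatorname{\Romannum{1}}_{2s_i}^{2p_i}(\beta)$. I would pick $z_0$ on the upper half plane with $e^{2\pi iz_0}=\beta$; this is an algebraic value of $q$ since $\beta$ is algebraic with $|\beta|<1$. Theorem~\ref{theorem_main} then identifies $f_i(z):=\operatorname{\Romannum{1}}_{2s_i}^{2p_i}(e^{2\pi iz})$ with a $\overline{\mathbb{Q}}$-linear combination of derivatives $\D^{j}E_{2k}(2z)$ of Eisenstein series (via $A_{2k-1}=\frac{B_{2k}}{4k}(1-E_{2k})$). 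By Theorem~\ref{the_algind} it is enough to establish algebraic independence of $f_1,f_2,f_3$ as functions on the upper half plane.

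Next, I would isolate the top-weight component of each $f_i$. The summand of Theorem~\ref{theorem_main} indexed by $r=s_i$ has the maximal weight $2(s_i+p_i)$; using $t(2s_i,2s_i)=1$, it is a non-zero rational multiple of
\[
h_i(z)\;:=\;\D^{m_i}\bigl[E_{w_i}(2z)\bigr],
\]
where $(w_i,m_i)=(2s_i-2p_i,\,2p_i)$ in Case~1 ($s_i\geq p_i+4$) and $(w_i,m_i)=(2p_i-2s_i+2,\,2s_i-1)$ in Case~2 ($p_i\geq s_i+3$); in both cases $w_i\geq 8$ and $w_i+2m_i=2(s_i+p_i)$. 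By Lemma~\ref{lemma_quasialg}, it therefore suffices to prove that $h_1,h_2,h_3$ are algebraically independent over $\mathbb{C}$.

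I would then apply Proposition~\ref{prop_algindder} to $(E_{w_1}(2z),E_{w_2}(2z),E_{w_3}(2z))$ with derivative orders $(m_1,m_2,m_3)$. Its sufficient criterion becomes that the weight-zero modular function
\[
\Pi(z)\;:=\;E_{w_1}(2z)^{m_2w_3-m_3w_2}\,E_{w_2}(2z)^{m_3w_1-m_1w_3}\,E_{w_3}(2z)^{m_1w_2-m_2w_1}
\]
is not constant. A four-way case analysis on whether each index is in Case~1 or Case~2 shows that, for every pair of distinct indices $i\neq j$, the exponent $m_iw_j-m_jw_i$ is a non-zero rational multiple of exactly one of $p_is_j-p_js_i$, $(2s_i-1)(p_j-s_j+1)-(2s_j-1)(p_i-s_i+1)$, or $(2s_i-1)(s_j-p_j)-2p_j(p_i-s_i+1)$, so the hypotheses~(1)--(3) force all three exponents of $\Pi$ to be non-zero.

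The hard part I anticipate is proving $\Pi\not\equiv\mathrm{const}$. Since all $w_i\geq 8$ and no Eisenstein series of weight $\leq 6$ appears in $\Pi$, Theorem~\ref{theo_eisalgind} implies that any multiplicative identity must be derivable from the four primitive ones $E_4^2=E_8$, $E_4E_6=E_{10}$, $E_4E_{10}=E_{14}$, $E_6E_8=E_{14}$; the only derived three-term relation supported entirely in weights $\geq 8$ is $E_8E_{10}^2=E_{14}^2$. Thus $\Pi$ can be constant precisely when $\{w_1,w_2,w_3\}=\{8,10,14\}$ and the exponent vector is proportional to $(1,2,-2)$, and in that exceptional sub-case Proposition~\ref{prop_algindder} must be supplemented: either a fine arithmetic argument on $m_i\in\{2p_i,2s_i-1\}$ will rule the configuration out under the hypotheses, or a next-to-leading-weight refinement of Lemma~\ref{lemma_quasialg} will distinguish the $h_i$ despite the multiplicative coincidence among the base Eisenstein series. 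Once this sub-case is settled, Proposition~\ref{prop_algindder} yields the algebraic independence of $h_1,h_2,h_3$, Lemma~\ref{lemma_quasialg} lifts it to $f_1,f_2,f_3$, and Theorem~\ref{the_algind} delivers the claimed algebraic independence of $\zeta_U^{2p_i}(2s_i)$ over $\mathbb{Q}$.
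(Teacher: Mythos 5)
Your proposal follows the paper's own route essentially step for step: reduce via Lemma \ref{lemma_zetaFiboa}, Theorem \ref{theorem_main} and Theorem \ref{the_algind} to the algebraic independence of three quasimodular forms, pass to the top-weight components $\D^{2p_i}E_{2s_i-2p_i}(2z)$ (resp.\ $\D^{2s_i-1}E_{2p_i-2s_i+2}(2z)$) via Lemma \ref{lemma_quasialg}, and apply Proposition \ref{prop_algindder}. Your matching of the three inequality hypotheses with the non-vanishing of the three exponents $m_iw_j-m_jw_i$ in the four Case-1/Case-2 combinations is exactly the intended content and is correct.

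The one point where you go beyond the paper is the sub-case you flag, and you are right to flag it --- but you do not resolve it, and the first of your two proposed escape routes cannot work. The paper simply asserts that $w_i\geq 8$ ``eliminates the exceptions'' of Theorem \ref{theo_eisalgind}; however, $E_8E_{10}^2=E_{14}^2$ is a consequence of the four listed identities supported entirely in weights $\geq 8$ (it is the unique such relation up to powers), and the offending configuration is genuinely realizable under all the stated hypotheses. Take $(p_1,s_1)=(2,6)$, $(p_2,s_2)=(3,8)$, $(p_3,s_3)=(4,11)$: all three indices are in Case 1 with $(w_i)=(8,10,14)$ and $(m_i)=(4,6,8)$, every inequality in the hypothesis list is satisfied, yet the exponent vector of your monomial $\Pi$ is $(4,8,-8)$, so $\Pi=\left(E_8E_{10}^2E_{14}^{-2}\right)^4\equiv 1$ and Proposition \ref{prop_algindder} gives nothing. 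Hence no ``fine arithmetic argument'' can rule this configuration out under the stated hypotheses, and one is forced into your second option --- some refinement beyond the top-weight monomial criterion --- which is a substantive argument that is missing from your write-up. To be fair, this is a gap in the paper's own proof as well (the paper does not even notice the derived relation), but as written your proposal does not close it either.
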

\begin{proof}
We have the expression
\[
\begin{aligned}
\zeta_U^{2p}(2s)&=(\alpha-\beta)^k\text{I}_{2s}^{2p}(\beta)\\
&=\sum_{r=1}^{p}t(2s,2r)\D^{2r-1}A_{2p-2r+1}(\beta^2)+\sum_{r=p+1}^{s}t(2s,2r)\D^{2p} A_{2r-2p-1}(\beta^2),
\end{aligned}
\]
and by Theorem \ref{the_algind}, we only need to focus on the highest degree terms, which is
\[
\D^{2s-1}E_{2p-2s+2}(z)\text{ if } p\geq s \text{, and }\D^{2p}E_{2s-2p}(z)\text{ if }1\leq p\leq s-1.
\]
We can use Proposition \ref{prop_algindder} to determine the algebraic independence of the above functions. And this reduces to determining certain identities about Eisenstein series. The inequality conditions prevent trivial cases. Conditions $s_i\geq p_i+4$ or $p_i\geq s_i+3$ eliminates the exceptions in Theorem \ref{theo_eisalgind}. Under these conditions, Theorem \ref{theo_eisalgind} implies that there is no equation about Eisenstein series, and this completes the proof.
\end{proof}

Using the above method, we can obtain more results on algebraic independence. We give an example to illustrate that determining the algebraic independence of certain reciprocal sums of Fibonacci numbers can be reduced to some numerical computations.
\begin{proposition}
Let $1\leq s_1<s_2<s_3<300$ be three integers. If $\alpha\beta=-1$, then the three numbers $\zeta^2_U(2s_1)$, $\zeta^2_U(2s_2)$, $\zeta^2_U(2s_3)$ are algebraically independent over $\mathbb{Q}$.
\end{proposition}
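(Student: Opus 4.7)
The strategy adapts Proposition \ref{theo_alg_zetau} to the case $\alpha\beta=-1$, in which the relevant forms live at level $4$. It has three stages: set-up, a reduction via the algebraic-independence chain of Theorem \ref{the_algind}, Lemma \ref{lemma_quasialg} and Proposition \ref{prop_algindder}, and a final computational verification.

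First, by Lemma \ref{lemma_zetaFiboa},
\[
\zeta^2_U(2s)=(\alpha-\beta)^{2s}\bigl(4\operatorname{\Romannum{1}}_{2s}^{2}(\beta^2)+\operatorname{\Romannum{4}}_{2s}^{2}(\beta^2)\bigr),
\]
and combining Theorem \ref{theorem_main} with Proposition \ref{prop_expressqseries} writes this as an algebraic multiple of $G_s(z_0)$, where $\beta^2=e^{2\pi i z_0}$ and $G_s$ is an explicit quasimodular form of weight $2s+2$ and depth at most $2$ on $\Gamma_0(4)$ with rational Fourier coefficients. A term-by-term inspection of Theorem \ref{theorem_main} shows that $G_s$ is a sum of quasimodular pieces of weights $4,6,\dots,2s+2$, and that for $s\geq 2$ its top weight-$(2s+2)$ piece equals a nonzero rational multiple of $\D^2 F_s$, where
\[
F_s(z):=c_1(s)\,E_{2s-2}(z)+c_2(s)\,E_{2s-2}(2z)+c_3(s)\,E_{2s-2}(4z)\in M_{2s-2}(\Gamma_0(4)).
\]
The $c_i(s)\in\mathbb{Q}^{\times}$ are determined by the expansion $D_{2s-3}(q)=2^{2s-2}A_{2s-3}(q^4)-(2^{2s-2}+1)A_{2s-3}(q^2)+A_{2s-3}(q)$ and the sign $(-1)^{s-1}$ in Theorem \ref{theorem_main}.

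Assuming $s_1\geq 2$, I would then apply Theorem \ref{the_algind} to reduce the algebraic independence of the values $\zeta^2_U(2s_i)$ over $\mathbb{Q}$ to algebraic independence of the functions $G_{s_1},G_{s_2},G_{s_3}$ over $\mathbb{C}$, then Lemma \ref{lemma_quasialg} to reduce this to algebraic independence of the top-weight pieces $\D^2 F_{s_1},\D^2 F_{s_2},\D^2 F_{s_3}$, and finally Proposition \ref{prop_algindder} with $a=b=c=2$ and $u=2s_1-2,v=2s_2-2,w=2s_3-2$, which reduces the problem to showing that the weight-zero modular function
\[
\Phi_{s_1,s_2,s_3}(z):=F_{s_1}(z)^{4(s_3-s_2)}\,F_{s_2}(z)^{4(s_1-s_3)}\,F_{s_3}(z)^{4(s_2-s_1)}
\]
on $\Gamma_0(4)$ is not a constant. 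The boundary case $s_1=1$ needs a separate treatment, since $F_1$ degenerates to weight zero and the top weight-$4$ component of $G_1$ is a genuinely quasimodular expression involving $E_2(z)^2$ and $E_2(4z)^2$. In that case one expresses $G_1,G_{s_2},G_{s_3}$ as polynomials in the generators $E_2,G,H$ of the quasimodular ring on $\Gamma_0(4)$ (as in Proposition \ref{prop_diff_two}) and argues algebraic independence directly from their leading monomials in $E_2$.

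Finally, because the Fourier coefficients of $F_s$ are explicit linear combinations of the divisor sums $\sigma_{2s-3}(n/d)$ with $d\in\{1,2,4\}$, the first few Fourier coefficients of $\Phi_{s_1,s_2,s_3}$ are explicit rational functions of the triple, so non-constancy can be detected coefficient-by-coefficient. The main obstacle is precisely this last step: I do not see a slick structural argument (analogous to Theorem \ref{theo_eisalgind} in the level-$1$ setting) that forces non-constancy of $\Phi_{s_1,s_2,s_3}$ uniformly in $(s_1,s_2,s_3)$. The bound $s_i<300$ in the statement strongly suggests that the intended verification is a finite (but large) symbolic computation over the $\binom{299}{3}$ admissible triples, checking in each case that some Fourier coefficient of $\Phi_{s_1,s_2,s_3}-\Phi_{s_1,s_2,s_3}(i\infty)$ is nonzero.
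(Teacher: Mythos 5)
Your proposal follows essentially the same route as the paper: reduce to the top-weight pieces via Theorem \ref{the_algind} and Lemma \ref{lemma_quasialg}, identify those pieces as $\D^2$ of an explicit rational combination of $E_{2s-2}(z)$, $E_{2s-2}(2z)$, $E_{2s-2}(4z)$ (with the boundary case $s_1=1$ treated separately, exactly as the paper does), and apply Proposition \ref{prop_algindder} with $a=b=c=2$ to reduce everything to the non-constancy of the weight-zero product you call $\Phi_{s_1,s_2,s_3}$. The step you flag as the main obstacle is indeed where the paper resorts to computation, but it does so far more economically than a triple-by-triple symbolic expansion: the first nontrivial Fourier coefficient of the product is, up to normalization, the single closed-form quantity
\[
\frac{(s_3-s_2)(s_1-1)}{(-1)^{s_1}B_{2s_1-2}}+\frac{(s_1-s_3)(s_2-1)}{(-1)^{s_2}B_{2s_2-2}}+\frac{(s_2-s_1)(s_3-1)}{(-1)^{s_3}B_{2s_3-2}}
\]
(with an analogous expression in the case $s_1=1$), so each admissible triple reduces to checking one Bernoulli-number inequality. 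Numerically this is nonzero for all triples with $s_3<300$ except $(s_1,s_2,s_3)=(4,6,8)$, where the first coefficient vanishes and one verifies non-constancy from the next coefficient, $f_4^2f_6^{-4}f_8^2=1-230400q^2+O(q^3)$. So your outline is correct and matches the paper's argument; what is missing is only this explicit first-coefficient criterion and the observation that it fails for exactly one triple below $300$, which your phrase ``some Fourier coefficient'' would cover but does not detect.
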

\begin{proof}
When discussing the algebraic independence of quasimodular forms, by Lemma \ref{lemma_quasialg}, we only need to consider the highest weight terms. After some calculations, we can see the functions correspond to highest weight terms of the numbers $\zeta_U^2(2)$ and $\zeta_U^2(2s)$ for $s\geq 2$ are $\D f_1$ and $\D^2 f_s$, respectively, where
\[
f_s(z)=\left\{\begin{aligned}
&4E_2(4z)-2E_2(2z)+E_2(z) &s=1,\\
&4^{s-1}E_{2s-2}(4z)-(4^{s-1}+1+(-1)^s)E_{2s-2}(2z)+E_{2s-2}(z) &s\geq 2.
\end{aligned}\right.
\]
We need to consider two cases separately. To begin with, assume that $1=s_1<s_2<s_3$. By Proposition \ref{prop_algindder}, to prove that the functions $\D f_1,\D^2 f_{s_2},\D^2 f_{s_3}$ are algebraically independent, it suffices to show that $f_1^{2s_3-2s_2}f_2^{3-s_3}f_3^{s_2-3}$ is not a constant. Considering its Fourier expansion, it suffices to show 
\[
8(s_3-s_2)+\frac{(2s_2-2)(3-s_3)}{(-1)^{s_2-1}B_{2s_2-2}}+\frac{(2s_3-2)(s_2-3)}{(-1)^{s_3-1}B_{2s_3-2}}\neq 0.
\]
Numerical computations show that the above inequalities hold for $s_3<300$. If $1<s_1<s_2<s_3$, again by Proposition \ref{prop_algindder}, it is sufficient to show that the function $f_{s_1}^{s_3-s_2}f_{s_2}^{s_1-s_3}f_{s_3}^{s_2-s_1}$ is not a constant. Considering its Fourier expansion, the proof is completed by showing  
\[
\frac{(s_3-s_2)(s_1-1)}{(-1)^{s_1}B_{2s_1-2}}+\frac{(s_1-s_3)(s_2-1)}{(-1)^{s_2}B_{2s_2-2}}+\frac{(s_2-s_1)(s_3-1)}{(-1)^{s_3}B_{2s_3-2}}\neq 0.
\]
Numerical computations demonstrate that the above inequalities hold for $s_3<300$ with only one exception $s_1=4,s_2=6,s_3=8$. A more carefully calculation shows that the function $f_4^2f_6^{-4}f_8^2=1 - 230400q^2+O(q^3)$ is not a constant, which completes the proof.
\end{proof}

\providecommand{\bysame}{\leavevmode\hbox to3em{\hrulefill}\thinspace}
\providecommand{\MR}{\relax\ifhmode\unskip\space\fi MR }
\providecommand{\MRhref}[2]{%
  \href{http://www.ams.org/mathscinet-getitem?mr=#1}{#2}
}
\providecommand{\href}[2]{#2}

\end{document}